\numberwithin{equation}{section}
\newtheorem{theorem}{Theorem}[section]
\newtheorem{lemma}[theorem]{Lemma}
\newtheorem{proposition}[theorem]{Proposition}
\newtheorem{corollary}[theorem]{Corollary}
\newenvironment{remark}[1][Remark.]{\begin{trivlist}
\item[\hskip \labelsep {\scshape #1}]}{\end{trivlist}}
\def\TV{\operatorname{TV}}
\def\B{\mathbb{B}}
\def\D{\mathbb{D}}
\def\E{\mathbf{E}}
\def\G{\mathbb{G}}
\def\H{\mathbb{H}}
\def\N{\mathbb{N}}
\def\Q{\mathbb{Q}}
\def\R{\mathbb{R}}
\def\S{\mathbb{S}}
\def\GG{\mathcal{G}}
\def\HH{\mathcal{H}}
\def\RR{\mathscr{R}}
\def\NN{\mathcal{N}}
\def\CC{\mathcal{C}}
\def\SS{\mathcal{S}}
\def\H{\mathbb{H}}
\def\N{\mathbb{N}}
\def\P{\mathbf{P}}
\renewcommand{\d}{\mathrm{d}}
\newcommand{\e}{\mathrm{e}}
\newcommand{\Var}{\operatorname{Var}}
\newcommand{\BB}{\mathcal{B}}
\newcommand{\DD}{\mathcal{D}}
\newcommand{\FF}{\mathcal{F}}
\renewcommand{\GG}{\mathcal{G}}
\newcommand{\ep}{\varepsilon}
\renewcommand{\tt}{^{t}}
\renewcommand{\subseteq}{\subset}
\newcommand{\WW}{\mathcal{W}}
\renewcommand{\hat}{\widehat}
\renewcommand{\Upsilon}{Z}
\renewcommand{\tilde}{\widetilde}
\begin{document}

\begin{frontmatter}

\title{Uniform central limit theorems for multidimensional diffusions}
\runtitle{Donsker theorems for multivariate diffusions}
%
\begin{aug}
  \author{\fnms{Angelika}  \snm{Rohde}
\ead[label=e1]{angelika.rohde@math.uni-hamburg.de}} 
\and
  \author{\fnms{Claudia} \snm{Strauch}\ead[label=e2]{claudia.strauch@math.uni-hamburg.de}}
%
%
  \runauthor{A. Rohde and C. Strauch}
  \affiliation{Universit\"at Hamburg}
%
%
%
\end{aug}
\begin{abstract}  
It has recently been shown that there are substantial differences in the regularity behavior of the empirical process based on scalar diffusions as compared to the
classical empirical process, due to the existence of diffusion local time. 
Besides establishing strong parallels to classical theory such as Ossiander's bracketing CLT and the ge\-ne\-ral Gin\'e--Zinn CLT for uniformly bounded 
families of functions, we find increased regularity also for multivariate ergodic diffusions, assuming that the invariant measure is finite with Lebesgue density $\pi$. 
The effect is diminishing for growing dimension but always present. 
The fine differences to the classical iid setting are worked out using exponential inequalities for martingales and additive functionals
of continuous Markov processes as well as the characterization of the sample path behavior of Gaussian processes by means of the generic chaining bound.
To uncover the phenomenon, we study a smoothed version of the empirical diffusion process. 
It turns out that uniform weak convergence of the smoothed empirical diffusion process under necessary and sufficient conditions can take place 
with even exponentially small bandwidth in dimension $d=2$, and with strongly undersmoothing bandwidth choice for parameters $\beta > d/2$ in case $d\geq 3$, 
assuming that the coordinates of drift and diffusion coefficient belong to some H\"older ball with parameter $\beta$. 
\end{abstract}
%
%

\end{frontmatter}

\section{Introduction}

\small

Let $X$ be a stationary, ergodic diffusion process on $E \subseteq \R^d$ with invariant probability measure $\mu$,
and denote its infinitesimal generator on $L^2(E,\d\mu)$ by $A$. We refer to Section \ref{sec_pre} for precise definitions.
The functional central limit theorem for stationary ergodic Markov processes due to \cite{bhat82} states that for any fixed $t\geq0$ and any fixed function $f$ of the form
$f = Ag$,
\begin{equation}\nonumber
	\frac{1}{\sqrt n}\bigg(\int_0^{nt} f(X_u)\d u\bigg)_{t\geq 0}\ \rightarrow_{\DD}\  \sigma(f)W \hspace*{10mm} (n \rightarrow \infty)
\end{equation}
where $\sigma^2(f) := -2\int_E f(x)g(x)\mu(\d x)$ and $W=(W_t)_{t\geq 0}$ denotes a standard Wiener process. In particular, 
$$
\frac{1}{\sqrt{n}}\int_0^n A g(X_u)\d u\ \rightarrow_{\DD}\ Z \sim \NN(0,\sigma^2(f))\hspace*{10mm} (n \rightarrow \infty).
$$ 
The passage to a continuous-time result is obvious. 
Given any finite set of functions $f_1,\ldots,f_m$ solving the Poisson equation $f_i = Ag_i$, $i =1,\ldots,m$, the law of the $m$-dimensional process with components
$t^{-1/2}\int_0^{t}A g_i(X_u)\d u$, $i=1,\ldots,m$, 
converges weakly to an $m$-dimensional centered Gaussian distribution with asymptotic covariances given by
$$-\int_E Ag_i(x)g_j(x)\mu(\d x) - \int_E g_i(x)Ag_j(x)\mu(\d x), \qquad i,j=1,\ldots,m,$$
by means of the Cram\'er-Wold device. 

Classical empirical process theory is concerned with limit results which hold uniformly over entire (pos\-sib\-ly infinite-dimensional) classes of functions. 
The fundamental object of investigation is the classical empirical process in the setting of independent, identically distributed random variables $X_1,...,X_n\sim\P$
$$
\big(\G_n^{classic}(f)\big)_{f\in\FF}\ :=\ \left(\frac{1}{\sqrt{n}}\sum_{i=1}^n\big(f(X_i)-\E f(X_1)\big)\right)_{f\in\FF},
$$
where $\FF$ is a class of functions, typically in $L^2(\d\P)$. 
Lindeberg's CLT gives convergence of the finite-dimensional marginals whenever the variance of $f(X_1)$ is finite.
In order to extend this result to a CLT in $\ell^{\infty}(\FF)$ which holds uniformly over some infinite-dimensional function class,
the existence of the limiting Gaussian process does not suffice.

It was shown in \cite{vdvvz05} that the empirical process of a regular scalar diffusion on an interval $I \subseteq \R$ with finite speed measure 
behaves substantially different.
In fact, weak convergence of the empirical process takes place in $\ell^\infty(\FF)$ if and only if the limit exists as a tight, Borel measurable map.
The proof of this result is heavily based on an analysis of diffusion local time.
One crucial point is the fact that the empirical measure of a univariate regular diffusion is continuous with respect to Lebesgue measure. 
For dimension $d\geq 2$, diffusion local time does not exist, and it is not obvious how to derive uniform limit results under minimal conditions. 
In particular, the empirical measure of a multivariate diffusion is no longer Lebesgue-continuous.

Our first results for the so-called empirical multivariate diffusion process 
$$ \left(\G_t(f)\right)_{f\in\FF}\ :=\ \left(\frac{1}{\sqrt t}\int_0^t f(X_u)\d u\right)_{f\in\FF}$$
parallel to a large extent results from classical empirical process theory. 
However, we find increased regularity for the empirical diffusion process also in higher dimensions. The effect is diminishing with growing dimension but always present. 
To uncover this phenomenon, 
we investigate the subsequent modified version of the empirical process
$$\sqrt t\int_E f(x)\hat\pi_{t,h}(x)\d\lambda(x) 
		= \sqrt t\int_E f(x)\left(\frac{1}{th^d}\int_0^t K\left(\frac{x-X_u}{h}\right)\d u\right)\lambda(\d x),
$$ 
with $\lambda$ denoting the Lebesgue measure.
Smoothed empirical processes based on iid random variables have been investigated by different authors (see, e.g., \cite{yuk92} or \cite{vdv94}).
They found recently particular interest due to the observation that uniform weak convergence of smoothed empirical processes may be deduced even in 
situations when the original empirical process is not tight (Radulovi\'{c} and Wegkamp (2000, 2003, 2009), \cite{gini08}; see also \cite{menzi06}). 
The results however seem to be of limited scope.
For $d=1$ and without further smoothness assumptions on the class $\FF$, \cite{radweg09} require that the bandwidth $h_n$ 
of the kernel estimator satisfies at least $nh_n^2\rightarrow\infty$.
Under slightly different assumptions, \cite{gini08} even need that $nh_n^4\rightarrow\infty$.  
The proofs, based on the decomposition given in Theorem 3.2 in \cite{gizi84}, use closeness of the kernel density estimator $\hat{p}_n$ 
to the underlying density $p$ in a mean-squared sense, 
where $p$ is assumed to be at least differentiable already in the univariate situation. 
Our proof relies on a completely different approach, based on martingale approximation and theory of Markovian semigroups. 
We do not use explicitly the closeness of $\hat{\pi}_{t,h}$ to the Lebesgue density $\pi$ of the invariant measure in a mean squared sense. 
It turns out that uniform weak convergence of the smoothed empirical diffusion process under necessary and sufficient conditions can take place 
with even exponentially small bandwidth in dimension $d=2$, and with strongly un\-der\-smoo\-thing bandwidth choice for pa\-ra\-me\-ters 
$\beta>d/2$ in case $d\geq 3$, assuming that the coordinates of drift and diffusion coefficient belong to some H\"older ball with parameter $\beta$. 
Maybe surprisingly, the performance of the smoothed empirical diffusion process can be guaranteed 
even if the bandwidth is too small for ensuring consistency of $\hat{\pi}_{t,h}$.

\section{Preliminaries}\label{sec_pre}
\subsection{Notation and definitions}

Let $(E,\BB(E))$, $E \subseteq \R^d$, be a Borel measurable space, and let $X=(X_t)_{t\geq 0}$ 
be an $E$-valued Markov process with invariant pro\-ba\-bi\-li\-ty measure $\mu$.
Its transition semigroup is denoted $\left(P_t\right)_{t \geq 0}$ with corresponding transition densities $p_t(\cdot,\cdot)$. 
The infinitesimal generator $A$ of $\left(P_t\right)$ is defined on the domain 
\begin{equation}\nonumber
\left\{f \in \B_0: \left\|\frac{P_tf-f}{t}-g\right\|_{\sup}\rightarrow 0 \text{ for some } g \in \B_0, t \rightarrow 0\right\}
\end{equation}
by $ Af := \lim_{t \rightarrow 0}\left(P_tf-f\right)/t$,
the limit being taken in sup-norm, where
$\B_0 := \{f \in \CC_0: \left\|P_tf-f\right\|_{\sup} \rightarrow 0 \text{ as } t \rightarrow 0\}$ denotes the center of the semigroup, 
and $\CC_0$ is the space of continuous functions $f$ with $f(x)\rightarrow 0$ as $x$ approaches the boundary of $E$.
If the transition probabilities admit an invariant probability measure $\mu$ on $(E,\BB(E))$, then $\left(P_t\right)$ defines a contraction semigroup
on $L^2(E,\d\mu) =: L^2(\d\mu)$ (see \cite{bhat82}). 
With slight abuse of notation, we denote its infinitesimal generator, which is actually an extension of $A$ on $L^2(\d\mu)$, also by $A$, 
with cor\-res\-pon\-ding domain $\mathbb{D}_A\subseteq L^2(\d\mu)$ and range $\RR_A$. 
Together with the norm $\Arrowvert \cdot \Arrowvert_{\D_A}$, given via 
$\Arrowvert g\Arrowvert_{\D_A}^2:=\Arrowvert g\Arrowvert_{\mu,2}^2+\Arrowvert Ag\Arrowvert_{\mu,2}^2$, 
the couple $(\D_A,\Arrowvert \cdot\Arrowvert_{\D_A})$ defines a Banach space (see Section \ref{subsec: inter}).

In case of It\^o--Feller diffusions, the generator $A$ acts on $\CC_K^{\infty}$ as a second-order differential operator, that is
\begin{equation}\label{eq: A}
A_{\arrowvert \CC_K^{\infty}} = \frac{1}{2}\sum_{i,j=1}^d a_{ij}(\cdot)\frac{\partial^2}{\partial x_i\partial x_j} + \sum_{i=1}^d b_i(\cdot)\frac{\partial}{\partial x_i},
\end{equation}
where $a$ and $b$ are a matrix field and a vector field, respectively, on $\R^d$ such that the mappings $x \mapsto a(x)$ and $x\mapsto b(x)$ are
Borel measurable and locally bounded. 
$\CC_K^{\infty}$ denotes the space of infinitely often continuously differentiable functions of compact support in the interior of $E$. 
The matrix $a(x)$ is assumed to be positive definite for any $x$.
The ``carr\'e du champ'' operator on $\CC^2_K(\R^d) \times \CC^2_K(\R^d)$ is defined by
\begin{equation}\label{eq: cdc}
			\Gamma(g,\tilde g) := A(g\tilde g)- g A \tilde g- \tilde g A g = \langle \nabla g,a\nabla \tilde g\rangle.
\end{equation}
Let $1\leq p<\infty$. 
Subsequently, $\WW^{m,p}(\d\mu)$ denotes the Sobolev space of $m$-times weakly differentiable functions $g\in L^p(\d\mu)$ whose weak partial derivatives 
up to order $m$ belong to $L^p(\d\mu)$, too, equipped with the norm
$$
\Arrowvert g\Arrowvert_{\WW^{m,p}(\d\mu)}:=\sum_{\arrowvert\alpha\arrowvert\leq m}\Arrowvert \partial_w^{\alpha}g\Arrowvert_{\mu,p}.
$$
Here, 
$$\partial_w^{\alpha}\ = \frac{\partial_w^{\arrowvert \alpha\arrowvert}}{\partial_wx_1^{\alpha_1}...\partial_wx_d^{\alpha_d}}\ \ \ \text{for all}\ \alpha\in\{0,1,...,m\}^d,
$$
and the derivatives are understood in the weak sense.

Given any initial probability measure $\beta$, let $\P_\beta(\cdot) := \int_E \P_x(\cdot)\beta(\d x)$, denoting with $\P_x$ the law of $X$ starting in $X_0=x$.
Then $\left((X_t)_{t \geq 0}, \P_\mu\right)$ is a stationary  process.
Symmetric diffusions play a special role.
A semigroup $\left(P_t\right)_{t \geq 0}$ is called symmetric with respect to $\mu$, or $\mu$ is called reversible with respect to $\left(P_t\right)_{t \geq 0}$,
if for any $f,g \in L^2(\d\mu)$ 
$$\int_E fP_tg\d\mu = \int_E gP_tf\d\mu.$$
The generator $A$ of a symmetric diffusion is self-adjoint, and it holds $\int_E fA f\d\mu \leq 0$.

\subsection{Continuity of Gaussian processes}
A class of functions $\FF \subset \RR_A$ on $E$ is defined to be Donsker if there exists a tight Borel-measurable random element $\G$ of $\ell^\infty(\FF)$ such that
$\G_t \rightsquigarrow \G$ in $\ell^\infty(\FF)$. 
Here, $\rightsquigarrow$ denotes convergence in law of random elements in the generalized sense of Hoffmann-J\o rgensen (cf. \cite{dud99}, Chapter 3).
Tightness of $\G$ is equivalent to saying that $\G$ admits a version with almost all sample paths bounded and uniformly continuous on $\FF$ with respect to the 
pseudo-metric
\begin{equation}\label{eq_dg}
			d_\G^2(g,\tilde g) := -2 \int_E (g-\tilde g) A(g-\tilde g)\d\mu, \hspace*{7mm} g,\tilde g \in A^{-1}\FF.
\end{equation}
General results on the sample path behavior of Gaussian processes prove useful for deriving Donsker theorems under necessary and sufficient conditions.
Given a separable, infinite-dimensional Hilbert space $H$, a set $C \subset H$ is called a GC-set if the restriction of an isonormal
Gaussian process $L$ on $H$ can be chosen such that its sample functions are uniformly continuous on $C$.
A set $\GG \subset \mathcal{L}_0^2(\P)=\{f\in \mathcal{L}^2(\P): \P(f)=0\}$ is pregaussian if and only if the corresponding set in 
the quotient space $L_0^2(\P)=\{f\in L^2(\P): \P(f)=0\}$ is a GC-set.

\subsection{Assumptions}
We briefly summarize the fundamental assumptions for later purposes. 

(I) \textit{Poincar\'e's inequality.}
The carr\'e du champ $\Gamma$ is said to satisfy a Poincar\'e inequality on $L^2(\d\mu)$ if there exists some constant $c>0$ such that for
all $f \in \D_A$
\begin{equation}\label{eqC.11}
			\Var_\mu(f) := \int f^2 \d\mu - \left(\int f \d\mu\right)^2 \leq c \int \Gamma(f)\d\mu,
\end{equation}
where $\Gamma(f) := \Gamma(f,f)$.
For symmetric diffusions, Poincar\'e's inequality is commonly referred to as spectral gap inequality since (\ref{eqC.11}) 
is then equivalent to the existence of a spectral gap,
$$\lambda_1 := \sup\left\{\lambda \geq 0: E_\lambda - E_0 = 0\right\} = \frac{1}{c_P} > 0,$$
where $-\int_0^\infty \lambda \d E_\lambda$ denotes the spectral decomposition of $A$, and $c=c_P$ appears to be the smallest possible constant in (\ref{eqC.11}). 
Furthermore, (\ref{eqC.11}) is equivalent to the exponential decay of $P_t$ to the invariant measure $\mu$ in $L^2(\d\mu)$ (see, e.g., Theorem 1.3 in \cite{baketal08}),
$$\Var_\mu\left(P_t f\right) \leq \exp\left(-\frac{2t}{c_P}\right)\Var_\mu(f) \qquad \forall \ f \in L^2(\d\mu).$$

(II) \textit{Bound on the transition density.}
There exists some $C_0>0$ such that for any $u\geq t>0$ and for any pair of points $x,y\in\R^d$, satisfying $\Arrowvert x-y\Arrowvert_2^2\leq u$, we have
$$
p_t(x,y)\ \leq\ C_0\left(t^{-d/2}+u^{3d/2}\right).
$$ 

\begin{remark}
In case of a constant diffusion coefficient in a stochastic differential equation, 
assumption (II) holds in particular if the drift satisfies the at most linear growth condition (\cite{qizhe04}, Theorem 3.2, with the choice $q=1+t$ in their notation). 
Additional boundedness of the drift then even allows to drop the $u^{3d/2}$-term (\cite{qian03}, inequality (5)).
\end{remark}

(III) \textit{Uniform ellipticity.} 
There exists some positive constant $\alpha$ such that the differential operator $A$ in (\ref{eq: A}) satisfies the uniform ellipticity condition
$$
\xi\tt a(y)\xi\ \geq \ \alpha\Arrowvert\xi\Arrowvert_2^2 \ \text{for all}\ \xi\in \R^d\setminus \{0\}\ \text{and for all $y\in E$}.
$$

(IV) \textit{Symmetry.} 
The semigroup $(P_t)$ of transition operators is symmetric with respect to $\mu$.

(V) \textit{Invariant density.} 
The invariant probability measure on $(E,\BB(E))$ is Lebesgue continuous with density $\pi$ which is bounded and uniformly bounded away from zero 
on any compact subset of the interior of $E$.

Assumptions (I) -- (V) seem to be rather natural. 
We briefly illustrate them for an example of a diffusion with reflecting boundary conditions. 
For ease of representation, we restrict ourselves to reflecting diffusions on a one-dimensional interval, $[0,1]$, say. 
Consider the stochastic differential equation
$$
\d X_t\ = \ b(X_t)\d t\ +\ \sigma \d W_t\ +\ \nu(X_t)\d l_t,
$$
with $X_t\in [0,1]$ for $t\geq 0$, $W$ a standard Wiener process and $(l_t(X))$ a non-anticipative process that increases only when $X_t\in\{0,1\}$, 
which is part of the solution. 
We assume that $b:[0,1]\rightarrow\R$ is bounded and measurable, $\sigma$ is positive, and that the function $\nu$ satisfies $\nu(0)=1, \nu(1)=-1$. 
The boundedness of $b$ and positivity of $\sigma$ ensure the existence of a weak solution. 
The invariant measure is Lebesgue continuous with density $\pi(x)\d x\sim \sigma^{-2}\exp(-\int_0^x2b(y)/\sigma^2\d y)\d x$. 
Due to the compactness and the reflecting boundary conditions, the corresponding Markov process possesses a spectral gap. 
The associated operator $A$ is self-adjoint and elliptic on $L^2(\d\mu)$ with compact resolvent. 
Its domain $\D_A$ coincides with the subspace of $\WW^{2,2}(\d\mu)$ subject to Neumann boundary conditions. 
Assumptions (I)--(V) are satisfied. 
In order to avoid potential boundary considerations in the sequel, we restrict attention to compactly supported function classes in the interior of $E$ -- even 
if $E$ itself is compact.

\section{Parallels to the classical empirical process}\label{sec_parall}

In the sequel we focus on the special case of symmetric diffusion processes.
Poincar\'e's inequality can be seen as a minimal assumption for the following Bernstein-type inequality due to \cite{lez01}. 
Let $g$ be a bounded measurable function with $\int g \d\mu = 0$.
Then 
\begin{equation}\label{gl2}
		\P_\mu\left(\frac{1}{\sqrt t}\int_0^tg(X_u)\d u>r\right) \ \leq \  \exp\left(-\frac{r^2}{2\left(\sigma^2+c_P\|g\|_{\sup}r/\sqrt t\right)}\right)
			\quad \forall t,r>0,
\end{equation}
where $\sigma^2$ is given as
$$\sigma^2 := \sigma^2(g) := \lim_{t \rightarrow \infty}\frac{1}{t}\Var_{\P_\mu}\left(\int_0^t g(X_u)\d u\right).$$
We now state our first result which parallels Theorem 3.2 in \cite{gizi84}. Note that in contrast to the classical empirical process, the symmetrization technique is not available for the empirical diffusion process. Thus, the method of our proof differs from the proof in \cite{gizi84}. While they randomize in the asymptotic equicontinuity condition in order to apply the comparison inequality due to \cite{fer85}, our proof relies on a random decomposition of the function class $\FF$ and requires in particular the generic chaining bound. Given a function class $\FF$ and any $\delta >0$, let
$$\overline\FF_\delta := \left\{f-g: f,g \in \FF,d_{\G}(f,g) < \delta\right\}.$$

\begin{theorem}\label{theo_preg}
				Let $\left((X_t),\P_\mu\right)$ be a stationary, ergodic diffusion satisfying assumptions (I) -- (V). Denote by $\FF   \subseteq \RR_A\subseteq L_0^2(\d\mu)$ a class of uniformly bounded functions.
				Then $\FF$ is Donsker if and only if it is pregaussian and it holds for any $\eta >0$,
				\begin{equation}\label{eqB.6}
				\E_\mu^* \left\|\frac{1}{\sqrt t}\int_0^t f(X_u)\d u\right\|_{\overline\FF_{\left(\eta/\sqrt t\right)^{1/2}}} \rightarrow 0.
				\end{equation}
\end{theorem}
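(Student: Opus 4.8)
The plan is to prove both implications, with the sufficiency of pregaussianity together with (\ref{eqB.6}) forming the substantial direction. Throughout I identify $\FF$ with $A^{-1}\FF$ via $f=Ag$, so that the intrinsic standard-deviation pseudo-metric of the prospective limit is exactly $d_\G$ from (\ref{eq_dg}): for $f,\tilde f\in\FF$ with $f=Ag$, $\tilde f=A\tilde g$ one has $\sigma^2(f-\tilde f)=-2\int(g-\tilde g)A(g-\tilde g)\d\mu=d_\G^2(g,\tilde g)$. Then $\|\G_t\|_{\overline\FF_\delta}$ is the $d_\G$-modulus of continuity of the linear process $\G_t$, and the statement is the Gin\'e--Zinn characterization of Donsker classes (finite-dimensional convergence plus asymptotic equicontinuity) transcribed to the diffusion scale.

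\emph{Necessity.} If $\FF$ is Donsker, the limit $\G$ is a tight, Borel-measurable centered Gaussian element of $\ell^\infty(\FF)$; tightness forces its paths to be $d_\G$-uniformly continuous, so $\FF$ is pregaussian (a GC-set in the quotient space). Weak convergence yields asymptotic equicontinuity in probability, and the Bernstein inequality (\ref{gl2}) — applied to the uniformly bounded centered increments indexed by $\overline\FF_\delta$, whose asymptotic variances are at most $\delta^2$ — gives exponentially light tails for $\|\G_t\|_{\overline\FF_\delta}$, hence uniform integrability and the $L^1$ statement $\lim_{\delta\downarrow0}\limsup_t\E_\mu^*\|\G_t\|_{\overline\FF_\delta}=0$. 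Since $(\eta/\sqrt t)^{1/2}<\delta$ once $t$ is large, one has $\E_\mu^*\|\G_t\|_{\overline\FF_{(\eta/\sqrt t)^{1/2}}}\le\E_\mu^*\|\G_t\|_{\overline\FF_\delta}$ eventually; letting $t\to\infty$ and then $\delta\downarrow0$ sandwiches the left-hand side to $0$, which is exactly (\ref{eqB.6}).

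\emph{Sufficiency.} Finite-dimensional convergence of $(\G_t(f_1),\dots,\G_t(f_m))$ to the Gaussian law with the covariances recorded in the introduction is the Markov-process central limit theorem of \cite{bhat82}, available because $\FF\subseteq\RR_A$ makes each $f_i=Ag_i$ a solution of a Poisson equation. It then suffices to establish asymptotic tightness, i.e.\ $\lim_{\delta\downarrow0}\limsup_t\P_\mu^*(\|\G_t\|_{\overline\FF_\delta}>\ep)=0$ for every $\ep>0$, which together with the marginals gives $\G_t\rightsquigarrow\G$. As symmetrization is unavailable, I would chain the process $\G_t$ directly: fix $\delta,\eta>0$ and build an admissible sequence of partitions of $\overline\FF_\delta$ adapted to $d_\G$ — the announced \emph{random} (path-dependent) decomposition of $\FF$ replacing the comparison step — chaining from scale $\delta$ down to the crossover scale $(\eta/\sqrt t)^{1/2}$. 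Along a link of length $\rho\ge(\eta/\sqrt t)^{1/2}$ the functions have $\sigma^2=\rho^2\ge\eta/\sqrt t$, so in (\ref{gl2}) the Gaussian term dominates the linear term $c_P\|\cdot\|_{\sup}r/\sqrt t$ for $r\lesssim\eta/\|\cdot\|_{\sup}$; the increments are genuinely sub-Gaussian on this range, and the generic chaining bound produces a contribution of order $\gamma_2(\overline\FF_\delta,d_\G)$, which tends to $0$ as $\delta\downarrow0$ precisely because $\FF$ is a GC-set. The residual fluctuation inside the finest cells, of $d_\G$-diameter $<(\eta/\sqrt t)^{1/2}$, is $\|\G_t\|_{\overline\FF_{(\eta/\sqrt t)^{1/2}}}$ and is absorbed by hypothesis (\ref{eqB.6}). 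Sending $t\to\infty$, then $\eta\downarrow0$ and $\delta\downarrow0$, gives equicontinuity.

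The main obstacle is this chaining step. Three points require care: (i) certifying that the two-regime tail in (\ref{gl2}) is effectively sub-Gaussian on exactly the scales $\rho\in[(\eta/\sqrt t)^{1/2},\delta]$ traversed by the chain, so that the sub-exponential contribution — which carries the favorable factor $t^{-1/2}$ — stays negligible and does not pollute the $\gamma_2$ bound; (ii) constructing the admissible partitions so that the chaining sum is controlled by $\gamma_2(\overline\FF_\delta,d_\G)$ \emph{uniformly in} $t$, with the cut-off at $(\eta/\sqrt t)^{1/2}$ matched to (\ref{eqB.6}); and (iii) organizing the whole argument as a path-dependent decomposition of $\FF$ in the absence of symmetrization. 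The coupling of the $t$-dependent radius to the chaining scales is the delicate quantitative heart of the proof, and is where the diffusion-specific regularity gain is located.
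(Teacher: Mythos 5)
Your sufficiency argument is, in outline, the paper's own: finite-dimensional convergence via martingale approximation and the martingale CLT, then a decomposition at the crossover scale $\delta_t=(\eta/\sqrt t)^{1/2}$, with everything below that scale absorbed by hypothesis (\ref{eqB.6}) and everything above it controlled by a $\gamma_2$-bound that vanishes by pregaussianness, the sub-exponential term in (\ref{gl2}) being negligible thanks to its $t^{-1/2}$ factor. Where you differ is in execution, and the paper's device is worth noting because it makes your obstacles (i) and (ii) disappear. The paper does \emph{not} build $t$-dependent admissible partitions chaining down to the crossover scale. It takes a single maximal $\delta_t$-separated set $f_1,\dots,f_{m_t}$ in $\FF$, uses the Gin\'e--Zinn triangle-inequality decomposition $\sup_{\overline{\FF}_\eta}|\G_t|\le 2\sup_{\overline{\FF}_{\delta_t}}|\G_t|+\max_{i\ne j}|\G_t(f_i-f_j)|$, and then splits each netted increment by truncation at the level $\sigma_f^2\sqrt t/c_f$ at which Lezaud's inequality changes regime: the truncated part is sub-Gaussian with respect to $d_\G$ at \emph{all} deviation levels (see (\ref{eq: subgaussian})) and is fed into the generic chaining bound, giving $L\gamma_2(\overline{\FF}_\eta,d_\G)$; the remainder is sub-exponential (see (\ref{eq: subexponential})) and is handled by Pisier's maximal inequality in the $\psi_1$-Orlicz norm, giving $K\log(m_t)\,c'/\sqrt t$, which tends to zero because Sudakov's minoration (available since $\FF$ is pregaussian) yields $\delta_t\sqrt{\log m_t}\to 0$, i.e. $\log m_t=o(\sqrt t)$. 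Truncation thus decouples the sub-exponential regime from the chaining entirely, instead of certifying sub-Gaussianity scale by scale along the chain as you propose.

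In the necessity direction there is a genuine flaw in your sketch: the step ``Bernstein applied to the increments gives exponentially light tails for $\|\G_t\|_{\overline{\FF}_\delta}$, hence uniform integrability'' is a non sequitur, since (\ref{gl2}) is a pointwise bound for each fixed function and pointwise tails do not control the tail of a supremum over an infinite class; moreover no chaining argument can produce such uniform control below the crossover scale, because there the sub-exponential term dominates --- which is precisely why (\ref{eqB.6}) is an assumption and not a consequence of pregaussianness. What the Donsker property gives directly is asymptotic equicontinuity in outer \emph{probability}; upgrading this to the outer-expectation statement (\ref{eqB.6}) needs a uniform-integrability argument that your proposal does not actually supply. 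To be fair, the paper does not supply one either: it disposes of necessity with the single sentence that it ``is therefore clear,'' so on this point your attempt is no less complete than the published proof, but the Bernstein justification you offer should be deleted or replaced rather than believed.
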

$\E^*$ and $\P^*$ denote outer expectation and probability, respectively.
\begin{proof}
Closedness of $A$ implies that the null space $N_A := \left\{h \in \D_A: Ah = 0\right\}$ is a closed subset of $L^2(\d\mu)$,
that is, for any $f \in \RR_A$, there exists some unique $g \in \D_A \cap N_A^\perp$ with $Ag=f$. 
For this choice of $g$, we have in particular $\E_{\mu}g(X_0)=0$. 
Convergence of the finite-dimensional marginals follows from martingale approximation 
$$\frac{1}{\sqrt{t}}\Big(g(X_t)-g(X_0)-\int_0^tAg(X_s)ds\Big)
$$ 
and the martingale CLT (see Section 4). It is therefore clear that the above conditions are necessary for $\FF$ to be Donsker. 
Hence, it remains to prove asymptotic equicontinuity. 
Denote by $(\G(f))_{f \in \FF}$ the limiting process whose sample paths are bounded and uniformly continuous with respect to $d_\G$.
Sudakov's minoration (cf. Corollary 3.19 on p. 81 in \cite{letala91}) implies that
$$\lim_{\ep \searrow 0}\ep \sqrt{\log N(\ep,\FF,d_\G)} = 0.$$
For fixed $\eta \geq 0$, define $\delta_t := (\eta/\sqrt t)^{1/2}$ and $m_t := N(\delta_t,\FF,d_\G)$.
Then there exist functions
$f_1, \ldots, f_{m_t} \in \FF$ such that $d_\G(f_i,f_j) > \delta_t$ for all $1\leq i \neq j\leq m_t$ and $\sqrt{\log m_t}\delta_t \rightarrow 0$.
Application of the triangle inequality yields the decomposition 
\begin{equation}\label{decomp}
\sup_{f \in \overline\FF_\eta}\left|\G_t(f)\right| \leq 2\sup_{f \in \overline\FF_{\delta_t}}\left|\G_t(f)\right| 
					+ \max_{1 \leq i \neq j \leq m_t}\left|\G_t(f_i-f_j)\right|,
\end{equation}
which goes originally back to \cite{gizi84}.
We bound the second term in (\ref{decomp}).
For any $f \in (\FF - \FF)$, define
$$X_{f,t} 
:= \frac{1}{\sqrt t}\int_0^t f(X_u)\d u, \qquad \sigma_f^2 := \E_\mu\Gamma({f}), \qquad c_f := \|f\|_{\sup}.$$
We consider the decomposition
\begin{equation}\label{dec1}
		\left|X_{f,t}\right| = 
			\left|X_{f,t}\right|\mathds{1}\Big\{\left|X_{f,t}\right|\leq \frac{\sigma_f^2}{c_f}\sqrt t\Big\} + \left|X_{f,t}\right|\mathds{1}
			\Big\{\left|X_{f,t}\right|> \frac{\sigma_f^2}{c_f}\sqrt t\Big\}.
\end{equation}
It then holds
\begin{equation}\P_\mu\left(\left|X_{f,t}\right| \mathds{1}\Big\{\left|X_{f,t}\right| \leq \frac{\sigma_f^2}{c_f}\sqrt t\Big\} > x\right)
\leq 2\exp\left(-\frac{x^2}{4\sigma_f^2}\right)\label{eq: subgaussian}
\end{equation}
and
\begin{equation}\P_\mu\left(\left|X_{f,t}\right| \mathds{1}\Big\{\left|X_{f,t}\right| > \frac{\sigma_f^2}{c_f}\sqrt t\Big\} > x\right)
\leq 2\exp\left(-\frac{x}{4c_f/\sqrt t}\right).\label{eq: subexponential}
\end{equation}
Thus (cf. the proof of Lemma A.1 in \cite{vdv96}), for some (universal) constant $K$ sufficiently large, not depending on $c_f$ and $\sigma_f$,
$$
		\E_\mu\psi_2\left(\frac{|X_{f,t}|\mathds{1}\left\{|X_{f,t}| \leq \frac{\sigma_f^2}{c_f}\sqrt t\right\}}{K\sigma_f}\right)
		\leq 1, \quad 
		\E_\mu\psi_1\left(\frac{|X_{f,t}|\mathds{1}\left\{|X_{f,t}| > \frac{\sigma_f^2}{c_f}\sqrt t\right\}}{Kc_f/\sqrt t}\right)
		\leq 1,
$$ 
where $\psi_p$ are the Young functions $\psi_p(x) = \exp(x^p)-1$, $p=1,2$.
Then inequality (2.10) in \cite{argi93} implies for $c' := \sup_{f \in \FF}c_f$,
$$\E_\mu \max_{1\leq i\neq j\leq m_t}\left|X_{f_i-f_j,t}\right| 
\mathds{1}\left\{|X_{f_i-f_j,t}| > \frac{\sigma_{f_i-f_j}^2}{c_{f_i-f_j}}\sqrt t\right\} \leq K\log(m_t) \frac{2c'}{\sqrt t}.$$
For estimating the first term in (\ref{dec1}), we use the generic chaining bound in Theorem 1.2.6 in \cite{tala05}, i.e.
$$
\E_\mu\max_{1 \leq i \neq j \leq m_t}\left|X_{f_i-f_j,t}\right|\mathds{1}\left\{|X_{f_i-f_j,t}| \leq \frac{\sigma_{f_i-f_j}^2}{c_{f_i-f_j}}\sqrt t\right\}
 \leq L \gamma_2\left(\overline\FF_\eta, d_\G\right)
$$
for some constant $L>0$, with $\gamma_2$ denoting the $\gamma_2$-functional (cf. \cite{tala05}, Definition 1.2.5).
This completes the verification of the asymptotic equicontinuity condition, noting that $\log(m_t)/\sqrt t \rightarrow 0$ as $t\rightarrow \infty$ by Sudakov's
inequality, while $\gamma_2(\overline\FF_\eta, d_\G) \rightarrow 0$ as $\eta \rightarrow 0$ by pregaussianness.
\end{proof}

The above description of the Donsker property characterizes the effect of pregaussianness on the asymptotic equicontinuity
condition but is of little interest for applications.
Condition (\ref{eqB.6}) remains to be verified, and for the latter purpose 
the concept of VC classes does not seem to be suitable for empirical diffusion processes. 
 Moreover, the result that a function class satisfying a uniform entropy condition is Donsker is proved
by means of symmetrization arguments and is equally not easily transferred to the diffusion setting.
In contrast, 
an analogue of Ossiander's classical bracketing CLT holds.

\begin{theorem}\label{theo_brack}
Let $\left(\left(X_t\right),\P_\mu\right)$ be a stationary, ergodic Feller diffusion satisfying conditions (I) -- (V). 
If $\FF \subseteq L_0^2(\d\mu)$ satisfies
\begin{equation}\label{eq_brack}
				\int_0^\infty \sqrt{\log N_{[\ ]}(\ep,\FF,L^2(\d\mu))}\d\ep < \infty,
\end{equation}
then $\FF$ is Donsker.
\end{theorem}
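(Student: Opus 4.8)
The plan is to follow the classical two-step scheme --- convergence of the finite-dimensional marginals together with asymptotic equicontinuity --- but to replace the symmetrization machinery of the iid theory by the Bernstein-type inequality (\ref{gl2}) of \cite{lez01} and a bracketing chaining tailored to it. The finite-dimensional convergence is obtained exactly as in the proof of Theorem \ref{theo_preg}: the spectral gap in (I) makes $-A$ boundedly invertible on $L_0^2(\d\mu)$, so every $f\in\FF\subseteq L_0^2(\d\mu)$ can be written as $f=Ag$ with $g\in\D_A$, and the martingale approximation $t^{-1/2}(g(X_t)-g(X_0)-\int_0^t f(X_u)\d u)$ together with the martingale CLT yields the Gaussian limit with covariance built from $\sigma^2$. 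It therefore remains to prove that, for every $\ep>0$,
\begin{equation}\nonumber
\lim_{\delta\searrow 0}\limsup_{t\to\infty}\P_\mu^*\Big(\sup_{f,\tilde f\in\FF,\,\|f-\tilde f\|_{\mu,2}<\delta}|\G_t(f)-\G_t(\tilde f)|>\ep\Big)=0,
\end{equation}
which, combined with total boundedness of $\FF$ in $L^2(\d\mu)$ (immediate from (\ref{eq_brack})), gives the Donsker property.

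The second ingredient is a variance comparison that makes $L^2(\d\mu)$-bracketing the correct gauge. For mean-zero $h$ the reversibility (IV) gives $\sigma^2(h)=2\int_0^\infty\langle h,P_th\rangle_\mu\,\d t$, and the exponential decay $\Var_\mu(P_th)\le\e^{-2t/c_P}\Var_\mu(h)$ equivalent to Poincar\'e's inequality (I) yields, by Cauchy--Schwarz,
\begin{equation}\nonumber
\sigma^2(h)\ \le\ 2\int_0^\infty\e^{-t/c_P}\|h\|_{\mu,2}^2\,\d t\ =\ 2c_P\,\|h\|_{\mu,2}^2 .
\end{equation}
Hence the intrinsic standard deviation is dominated by $\sqrt{2c_P}\,\|\cdot\|_{\mu,2}$, so (\ref{eq_brack}) forces finiteness of the Dudley entropy integral for $d_\G$ and thereby pregaussianness; moreover, applied to $h$ with $\|h\|_{\mu,2}<\ep$, the inequality (\ref{gl2}) splits into a sub-Gaussian tail of scale $\lesssim\ep$ and a sub-exponential tail of scale $\lesssim\|h\|_{\sup}/\sqrt t$, exactly as in (\ref{eq: subgaussian})--(\ref{eq: subexponential}).

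For the equicontinuity I would run a bracketing chaining mirroring the truncation already used in Theorem \ref{theo_preg}. Fix $\delta$, set $\ep_j:=2^{-j}\delta$ and choose nested $L^2(\d\mu)$-bracketings of $\FF$ of cardinality $N_j:=N_{[\ ]}(\ep_j,\FF,L^2(\d\mu))$ down to a finest level $q$; writing $\pi_jf$ for the bracket assigned to $f$ at level $j$, the differences $f-\tilde f$ telescope into chaining links $\pi_jf-\pi_{j-1}f$ (each ranging over at most $N_jN_{j-1}$ functions, with $\|\pi_jf-\pi_{j-1}f\|_{\mu,2}\lesssim\ep_{j-1}$) plus a finest-level remainder. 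For a generic link $\xi$ I would split $X_{\xi,t}$ at the crossover $\sigma_\xi^2c_\xi^{-1}\sqrt t$ exactly as in (\ref{dec1}): by (\ref{eq: subgaussian})--(\ref{eq: subexponential}) and the variance bound, the sub-Gaussian piece has $\psi_2$-scale $\lesssim\ep_{j-1}$ and the sub-exponential piece has $\psi_1$-scale $\lesssim c_\xi/\sqrt t$. Applying inequality (2.10) of \cite{argi93} over the links at each level and summing, the $\psi_2$-parts produce $\sum_j\ep_{j-1}\sqrt{\log N_j}\lesssim\int_0^\delta\sqrt{\log N_{[\ ]}(\ep,\FF,L^2(\d\mu))}\,\d\ep$, which vanishes as $\delta\searrow0$, while the $\psi_1$-parts and the remainder collect into an error carrying the factor $1/\sqrt t$.

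The hard part will be to show that this accumulated error vanishes as $t\to\infty$. Its two sources mirror the classical Ossiander difficulty: the sub-exponential links contribute on the order of $\sum_{j\le q}(\log N_j)/\sqrt t$ times the link sup-norms, and the finest-level bracket remainder, handled by positivity via $0\le\G_t(f-\pi_qf)\le\G_t(\Delta_qf-\E_\mu\Delta_qf)+\sqrt t\,\E_\mu\Delta_qf$ (with $\Delta_qf$ the bracket width), leaves a deterministic bias $\sqrt t\,\E_\mu\Delta_qf\le\sqrt t\,\ep_q$. One must therefore push the finest level to $\ep_q=o(t^{-1/2})$ while keeping the summed sub-exponential mass $o(1)$, for unbounded classes truncating the bracket endpoints at a slowly growing level and absorbing the discarded $L^1$-mass into the same remainder; reconciling these, and in particular controlling the level-by-level sup-norms entering the $\psi_1$-terms, is the crux. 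What makes it go through --- and is the whole point of the theorem --- is that the defective, sub-exponential regime of (\ref{gl2}) lives on the scale $\|\cdot\|_{\sup}/\sqrt t$: unlike the iid case, where the analogous large-jump term is only tamed by truncating the envelope at level $\sqrt n$, here it is intrinsically damped by $t^{-1/2}$, so that after the dust settles only the genuinely Gaussian chaining bound $\int_0^\delta\sqrt{\log N_{[\ ]}(\ep,\FF,L^2(\d\mu))}\,\d\ep$ survives, and letting $\delta\searrow0$ yields asymptotic equicontinuity.
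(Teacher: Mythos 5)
Your proposal follows essentially the same route as the paper's proof: finite-dimensional convergence via martingale approximation and the martingale CLT, asymptotic equicontinuity by running the classical bracketing (Ossiander-type) chaining with Lezaud's Bernstein-type inequality (\ref{gl2}) substituted for the classical Bernstein inequality, and the key metric comparison $d_\G\big(A^{-1}f,A^{-1}\tilde f\big)\lesssim \|f-\tilde f\|_{\mu,2}$ --- which you obtain from the spectral-gap decay of $P_t$ plus Cauchy--Schwarz, and the paper obtains from Cauchy--Schwarz plus Poincar\'e's inequality --- so that $L^2(\d\mu)$-bracketing controls the relevant entropy. The chaining bookkeeping you sketch and flag as ``the crux'' is precisely what the paper disposes of by citing Dudley's proof of Ossiander's theorem (pp.~239--244), noting that those arguments carry over verbatim once Bernstein is replaced by (\ref{gl2}), so your attempt matches the paper's argument in both structure and level of rigor.
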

Here $N_{[\ ]}(\ep,\FF,L^2(\d\mu))$ denotes the $\ep$-entropy with bracketing, that is, the smallest number of $\ep$-brackets (in $L^2(\d\mu)$) which
are required to cover $\FF$ (cf. \cite{vdvw96}, Definition 2.1.6).

\begin{proof} 
Ergodicity yields that $N_A$ is one-dimensional (\cite{bhat82}, Proposition 2.2). 
Since $(P_t)$ is a strongly continuous semigroup on $L^2(\d\mu)$ (implying that $\D_A$ is dense in $L^2(\d\mu)$), 
it follows that $\RR_A= \left\{f \in L^2(\d\mu): \int f\d\mu = 0\right\}$  by the spectral gap inequality (see \cite{bhat82}, Remark 2.3.1). 
Convergence of the finite-dimensional marginals of the empirical process $\left(\G_t(f)\right)_{f \in \FF}$ now follows from martingale 
appro\-xi\-ma\-tion and the martingale CLT, see (\ref{eq: Dynkin}) in Section 4.
It remains to prove asymptotic equicontinuity.
Ossiander's result is about $L^2$-bracketing.
The proof of this classical bracketing CLT as given in \cite{dud99}, pp. 239 -- 244, is based on chaining arguments which are also valid 
when the pseudo-metric $d_\G$ as defined in (\ref{eq_dg}) is used.
The only ingredient of the proof which does not apply in the diffusion context is the classical Bernstein-inequality
which can be replaced with the Bernstein-type inequality (\ref{gl2}).
Furthermore, by Cauchy--Schwarz and Poincar\'e's inequality,
$$
\lim_{t\rightarrow\infty}\mathrm{Var}_{\mu}\big(\G_t(Ag)\big) = \left\langle g,Ag\right\rangle_\mu \leq \|g\|_{\mu,2} \|Ag\|_{\mu,2} \lesssim \|Ag\|_{\mu,2}^2.$$
Here and subsequently, $\lesssim$ means less or equal up to some constant which does not depend on the variable parameters in the expression. 
Since $d_{\G}(A^{-1}f,A^{-1}h)\lesssim\|f-h\|_{\mu,2}$, the bracketing entropy numbers with respect to $d_\G$ can be upper-bounded by $L^2(\d\mu)$-bracketing.
\end{proof}

\begin{remark}
		It is also possible to discretize the empirical process and to work with the discretized version, exploiting some mixing properties.
		In particular, the symmetrization device can be applied after suitable decoupling to the discretized version, such that sufficient conditions ensuring asymptotic equicontinuity 
		(such as Vapnik-Chervonenkis type conditions) can be derived.
		We do not pursue this strategy here but refer the reader to \cite{rio00} for further results in this spirit.
\end{remark}


\section{An intermediate process indexed by smoothed functions}
Although the previous section reveals strong parallels, the work of \cite{vdvvz05} bares substantial differences in the regularity behavior 
of the empirical process of scalar diffusions as compared to the classical empirical process based on independent and identically distributed random variables. 
Indeed, they show for the empirical process of a one-dimensional diffusion with finite speed measure that pregaussianness already implies the Donsker property. 
The explanation lies in the Lebesgue continuity of the empirical measure due to the occupation times formula, i.e. the existence of diffusion local time. 
For higher dimension, diffusion local time does not exist and so this reasoning breaks down.

However, mean-integrated squared error bounds for the estimation of the invariant density at a fixed point in \cite{dalrei07} as compared to those for probability density estimation 
based on independent and identically distributed random variables strongly suggest that there is some increased regularity also in the multidimensional diffusion case.
Thus, one might hope that this effect is getting visible when studying some smoothed version of the empirical diffusion process 
\begin{align}\label{def:S}
(\S_{t,h}(f))_{f\in\FF}\ &:=\ \left(\sqrt t\int_E f(x)\hat\pi_{t,h}(x)\d x\right)_{f\in\FF}\\\nonumber
&=\ \bigg(\sqrt t\int_E f(x)\left(\frac{1}{th^d}\int_0^t K\left(\frac{x-X_u}{h}\right)\d u\right)\lambda(\d x)\bigg)_{f\in\FF},
\end{align}
based on some kernel estimator $\hat\pi_{t,h}$ of the invariant density $\pi$.

For independent and identically distributed random variables $X_1,...,X_n\sim \P$, where $\P$ is some probability measure on $(\R^d,\BB(\R^d))$ with Lebesgue density $p$, 
smoothed empirical processes $(\sqrt{n}\int f(x)\hat{p}_n(x)dx)_{f\in\FF}$ with some density estimator $\hat{p}_n$ have been studied very recently,
having regard to Donsker theorems under necessary and sufficient conditions.
Starting from the hypothesis that $\FF$ is pregaussian, Theorem 3.2 in \cite{gizi84} simplifies the problem of verifying asymptotic equicontinuity to the issue 
of proving some symmetrized analogue of (\ref{eqB.6}), using the closeness of the density estimator to the density in the mean squared sense. 
This has led already in the one-dimensional case to a lower bandwidth bound $nh_n^2\rightarrow\infty$, 
if $\hat{p}_n$ is some kernel density estimator with bandwidth $h_n$. 
Precisely, \cite{radweg09} use the decomposition by the Cauchy--Schwarz inequality,
\begin{align}
\E^*&\sup_{\substack{f\in\FF\\ \P f^2\leq \frac{\eta}{\sqrt{n}}}}\left|\sqrt{n}\int f(x)\big(\hat{p}_n(x)-\E\hat{p}_n(x)\big)\d x\right|\nonumber\\ 
\leq& \ \E^*\sup_{\substack{f\in\FF\\ \P f^2\leq \frac{\eta}{\sqrt{n}}}}
\left(\sqrt{n\P f^2}\right)\Big(\int\frac{\big(\hat{p}_n(x)-\E \hat{p}_n(x)\big)^2}{p(x)}\d x\Big)^{\frac{1}{2}} 
\lesssim \left(\sqrt{n}\int\frac{\mathrm{Var}\,\hat{p}_n(x)}{p(x)}\d x\right)^{\frac{1}{2}}.\label{radweg}
\end{align}
In case of the smoothed empirical diffusion process, this approach is not suitable 
because the constrained set $\overline{\FF}_{\delta}$ in Theorem \ref{theo_preg} is defined by means of $d_{\G}$ 
which is only upper bounded by $\Arrowvert A\cdot \Arrowvert_{\mu,2}$ due to the Poincar\'e inequality. 
Moreover, the above approach needs that the variance decreases to zero slightly faster than $1/\sqrt{n}$. 
Our proof for the smoothed empirical diffusion process is conceptually different. 
We do not use explicitly the closeness of $\hat{\pi}_{t,h}$ to $\pi$ in a mean squared sense. 
Remarkably, the performance of the smoothed empirical diffusion process we consider can be guaranteed 
even if the bandwidth is too small for ensuring consistency of $\hat{\pi}_{t,h}$.




\subsection*{Analyzing the intermediate process}\label{subsec: inter}
In order to circumvent the problematic verification of the equicontinuity condition (\ref{eqB.6}), we study as a preliminary object the 
``empirical process indexed by smoothed functions'' $(\H_{t,h}(g))_{g\in\GG}$, defined by
\begin{equation}\label{def:H}
\H_{t,h}(g) := \G_t\left(A\left(g\ast K_h\right)\right) = \frac{1}{\sqrt{t}}\int_0^tA\left(g\ast K_h\right)(X_u)\d u,
\end{equation}
where $K_h(x):=h^{-d}K( x/h)$ for some compactly supported kernel $K$ on $\R^d$ with $\int K \d\lambda=1$. 
The next proposition guarantees that the process is well-defined on any subspace of the domain $\D_A$ which is locally invariant under translation. We formulate a slightly more precise statement. Note that the result does not need any further specification of $A$ and in particular of $\D_A$ in terms of Sobolev spaces with boundary conditions.\nopagebreak
\begin{proposition}\label{prop: well-defined}
		Assume that $g(\cdot+uh)\in\D_A$ for $h\in [0,h_0],\, u\in\SS^{d-1}$,
		such that $\Arrowvert g(\cdot+uh)\Arrowvert_{\D_A}$ is uniformly bounded in $h\leq h_0,u\in\SS^{d-1}$. 
		Then  $g\ast K_h\in\D_A$ for $h\in[0,h_0]$, and the convolution is contained in the $\|\cdot\|_{\D_A}$-closure of 
		$\|K\|_{\TV}$ times the symmetric convex hull of $\left\{g(\cdot-y):\Arrowvert y\Arrowvert_2\leq h_0\right\}$.
\end{proposition}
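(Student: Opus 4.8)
The plan is to regard the convolution as a vector-valued (``weak'') integral of translates of $g$ and to exploit the closedness -- in fact the self-adjointness -- of $A$, rather than any smoothness of $z\mapsto Ag(\cdot-z)$. Writing $g_y:=g(\cdot-y)$ and substituting $y=hz$,
$$(g\ast K_h)(\cdot) = \int_{\R^d} g_y\,K_h(y)\,\d y = \int_{\R^d} g_{hz}\,K(z)\,\d z,$$
where, because $K$ is supported in the closed unit ball, every $z$ in the range of integration satisfies $\|hz\|_2\le h\le h_0$. Hence each integrand $g_{hz}$ lies in $\D_A$ with a common bound $\|g_{hz}\|_{\D_A}\le M$ by hypothesis. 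First I would record that $z\mapsto g_{hz}$ is $\|\cdot\|_{\mu,2}$-continuous: since the relevant functions are compactly supported in the interior of $E$ and $\pi$ is bounded there (assumption (V)), $L^2(\d\mu)$-translation is dominated by $L^2(\lambda)$-translation, which is continuous.

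For the first claim, $g\ast K_h\in\D_A$, I would argue as follows. The functional $\psi\mapsto\int\langle Ag_{hz},\psi\rangle_\mu K(z)\,\d z$ on $L^2(\d\mu)$ is bounded (dominated by $M\|K\|_{\TV}\|\psi\|_{\mu,2}$), so by Riesz representation it equals $\langle R,\cdot\rangle_\mu$ for a unique $R\in L^2(\d\mu)$; measurability of $z\mapsto\langle Ag_{hz},\psi\rangle_\mu$ is routine, using $\langle Ag_{hz},\psi\rangle_\mu=\langle g_{hz},A\psi\rangle_\mu$ for $\psi\in\D_A$ together with the continuity just noted, and then density of $\D_A$ and the bound $M$ for general $\psi$. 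Now for every $\psi\in\D_A$, Fubini and self-adjointness yield
$$\langle g\ast K_h, A\psi\rangle_\mu = \int\langle g_{hz}, A\psi\rangle_\mu K(z)\,\d z = \int\langle Ag_{hz},\psi\rangle_\mu K(z)\,\d z = \langle R,\psi\rangle_\mu.$$
Since $A$ is self-adjoint (assumption (IV)), this is precisely the assertion that $g\ast K_h\in\D_A$ with $A(g\ast K_h)=R$. The point to stress is that $A$ has been transferred onto the test function $\psi$, so no regularity of $z\mapsto Ag_{hz}$ is ever required.

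For the second claim let $S:=\{g_y:\|y\|_2\le h_0\}$ and let $C$ be the $\|\cdot\|_{\D_A}$-closure of $\|K\|_{\TV}$ times the symmetric convex hull of $S$; this $C$ is closed, convex and bounded. Every $\ell\in(\D_A)^*$ has the form $\ell(f)=\langle f,\phi\rangle_\mu+\langle Af,\psi\rangle_\mu$ with $\phi,\psi\in L^2(\d\mu)$ (the graph norm identifies $\D_A$ with a closed subspace of $L^2(\d\mu)\times L^2(\d\mu)$). Using $A(g\ast K_h)=R$ from the previous step,
$$\ell(g\ast K_h) = \int\big(\langle g_{hz},\phi\rangle_\mu+\langle Ag_{hz},\psi\rangle_\mu\big)K(z)\,\d z = \int \ell(g_{hz})\,K(z)\,\d z.$$
Writing $K=\operatorname{sgn}(K)\,|K|$ and using linearity, $\ell(g_{hz})K(z)=\ell\big(\operatorname{sgn}(K(z))\,g_{hz}\big)\,|K(z)|$, and $\operatorname{sgn}(K(z))\,g_{hz}$ lies in the symmetric convex hull of $S$, so $\|K\|_{\TV}\operatorname{sgn}(K(z))\,g_{hz}\in C$ and hence $\ell\big(\operatorname{sgn}(K(z))\,g_{hz}\big)\le\|K\|_{\TV}^{-1}\sup_{c\in C}\ell(c)$. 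Integrating against $|K|\,\d\lambda$, of total mass $\|K\|_{\TV}$, gives $\ell(g\ast K_h)\le\sup_{c\in C}\ell(c)$.

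Since this inequality holds for every $\ell\in(\D_A)^*$ and $C$ is closed and convex, the Hahn--Banach separation theorem (a closed convex set is the intersection of the closed half-spaces containing it) forces $g\ast K_h\in C$, which is the assertion. The essential difficulty -- and the reason a direct Riemann-sum approximation in the graph norm is not viable -- is that $A$ carries variable coefficients and therefore does not commute with translations, so there is no usable formula for $Ag_{hz}$ and no reason for $z\mapsto Ag_{hz}$ to be even continuous in $L^2(\d\mu)$. The entire argument is arranged to sidestep this: by testing against $(\D_A)^*$ and moving $A$ onto the test functions via self-adjointness, only the elementary $L^2(\d\mu)$-continuity of ordinary translation is invoked, which itself rests on the compact-support convention and the local boundedness of $\pi$ in assumption (V).
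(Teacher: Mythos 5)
Your proof is correct, but it follows a genuinely different route from the paper's. The paper first shows (Lemma \ref{lemma: A}) that Poincar\'e's inequality makes $(\D_A,\|\cdot\|_{\D_A})$ a \emph{separable} Hilbert space, and then transplants the vector-valued (Bochner) integral argument of Lemma 1 in Gin\'e--Nickl from $L^2(\d\Q)$ to the graph-norm space: the convolution is an integral of the translates $g(\cdot-y)$ with values in $(\D_A,\|\cdot\|_{\D_A})$, strong measurability coming from separability, so that membership in the closed symmetric convex hull and (by completeness of $\D_A$, i.e.\ closedness of $A$) membership in $\D_A$ drop out simultaneously. You instead give a self-contained weak-integral argument: you identify $A(g\ast K_h)$ by duality, moving $A$ onto test functions, which really shows $g\ast K_h\in\D_{A^*}=\D_A$ --- this is exactly where symmetry (assumption (IV)) enters essentially --- and you then get the convex-hull statement by Hahn--Banach separation in $(\D_A,\|\cdot\|_{\D_A})$. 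What each route buys: yours never needs separability of $\D_A$, hence bypasses Lemma \ref{lemma: A} and the Poincar\'e inequality entirely; the paper's never uses symmetry, so it is consistent with the remark preceding the proposition that no further specification of $A$ is required, whereas your argument is genuinely narrower in scope (for a non-self-adjoint generator your duality step only lands in $\D_{A^*}$). Two smaller points to flag: your $L^2(\d\mu)$-continuity of $z\mapsto g(\cdot-hz)$ silently invokes assumption (V) together with the standing convention that $g$ is compactly supported in the interior of $E$ (legitimate in context, but not part of the proposition's hypotheses); and the measurability step could in fact be freed of (IV) by noting that the uniform graph-norm bound plus weak closedness of the graph of $A$ makes $z\mapsto Ag(\cdot-hz)$ weakly continuous in $L^2(\d\mu)$ --- though this does not repair the dependence of the identification $\D_{A^*}=\D_A$ on symmetry.
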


Here, $\Arrowvert\cdot\Arrowvert_{\TV}$ denotes the total variation norm. The proof of Proposition \ref{prop: well-defined} is based on the subsequent lemma.

\begin{lemma}\label{lemma: A}
Let $A$ satisfy the Poincar\'e inequality. Then $\left(\D_A,\|\cdot\|_{\D_A}\right)$ is a separable Hilbert space.
\end{lemma}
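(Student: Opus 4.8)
The plan is to recognise $\|\cdot\|_{\D_A}$ as the graph norm of the operator $A$ acting on $L^2(\d\mu)$ and to transport the Hilbert space structure from the graph. First I would note that the squared norm $\|g\|_{\D_A}^2 = \|g\|_{\mu,2}^2 + \|Ag\|_{\mu,2}^2$ is induced by the symmetric, positive definite bilinear form
$$\langle g,\tilde g\rangle_{\D_A} := \langle g,\tilde g\rangle_\mu + \langle Ag, A\tilde g\rangle_\mu,$$
so that $(\D_A,\langle\cdot,\cdot\rangle_{\D_A})$ is an inner product space whose norm is exactly $\|\cdot\|_{\D_A}$. It then remains only to establish completeness and separability.

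For completeness I would introduce the linear map $\iota:\D_A\to L^2(\d\mu)\times L^2(\d\mu)$, $\iota(g):=(g,Ag)$. Equipping the product with its natural Hilbert norm $\|(u,v)\|^2=\|u\|_{\mu,2}^2+\|v\|_{\mu,2}^2$, the map $\iota$ is by construction a linear isometry of $(\D_A,\|\cdot\|_{\D_A})$ onto the graph $G(A):=\{(g,Ag):g\in\D_A\}$. Closedness of $A$ (already invoked in the proof of Theorem \ref{theo_preg}, and a consequence of $(P_t)$ being a strongly continuous semigroup on $L^2(\d\mu)$) says precisely that $G(A)$ is a closed linear subspace of the Hilbert space $L^2(\d\mu)\times L^2(\d\mu)$, hence itself complete. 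Pulling this back through the isometry $\iota$ shows that $(\D_A,\|\cdot\|_{\D_A})$ is complete, so it is a Hilbert space.

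For separability I would argue that $L^2(\d\mu)$ is separable, since $\mu$ is a Borel probability measure on $E\subseteq\R^d$ and hence $\BB(E)$ is countably generated; consequently $L^2(\d\mu)\times L^2(\d\mu)$ is separable, and every subset of a separable metric space is separable in the induced metric. In particular $G(A)$ is separable, and transporting a countable dense subset back through $\iota^{-1}$ yields a countable dense subset of $\D_A$.

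The argument is essentially soft, so I do not anticipate a serious obstacle; the only point needing care is the justification that $A$ is closed, which is why I lean on the fact that $A$ generates a strongly continuous semigroup. If one preferred to avoid citing closedness directly, the hypothesis of the lemma could be used instead: self-adjointness together with $\int f A f\,\d\mu\leq 0$ and the Poincar\'e inequality furnishes a spectral decomposition $A=-\int_0^\infty\lambda\,\d E_\lambda$, whence $\D_A=\{g:\int_0^\infty\lambda^2\,\d\langle E_\lambda g,g\rangle<\infty\}$ and completeness under $\|g\|_{\D_A}^2=\int_0^\infty(1+\lambda^2)\,\d\langle E_\lambda g,g\rangle$ follow from standard spectral calculus; this is the route in which the stated assumption on $A$ is genuinely used.
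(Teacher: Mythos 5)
Your proof is correct, but your separability argument is genuinely different from the paper's. The paper proves completeness exactly as you do in spirit (a Cauchy sequence $(g_n)$ in the graph norm has $L^2(\d\mu)$-limits $g$ and $G$ of $(g_n)$ and $(Ag_n)$, and closedness of $A$ gives $G=Ag$); your formulation via the isometry $\iota(g)=(g,Ag)$ onto the closed graph $G(A)\subset L^2(\d\mu)\times L^2(\d\mu)$ is the same mechanism packaged abstractly. For separability, however, the paper constructs an explicit countable dense set: writing $g=g^\perp+g_0$ with $g^\perp\in\D_A\cap N_A^\perp$ and $g_0\in N_A$, it takes countable dense families $(f_n)\subset\RR_A$ and $(g_n)\subset\D_A\cap N_A$ and approximates $g^\perp$ by $A_{|N_A^\perp\cap\D_A}^{-1}(f_n)$; the key step is that $\|A(g_k^\perp-g^\perp)\|_{\mu,2}\to 0$ forces $\|g_k^\perp-g^\perp\|_{\mu,2}\to 0$, and \emph{this} is where the Poincar\'e hypothesis of the lemma is actually used. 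You instead invoke the hereditary separability of metric spaces: $G(A)$ is a subset of the separable space $L^2(\d\mu)\times L^2(\d\mu)$, hence separable, and $\iota^{-1}$ transports a countable dense set back to $(\D_A,\|\cdot\|_{\D_A})$. Your route is shorter and strictly more general --- it shows the conclusion holds for any closed, densely defined operator on a separable Hilbert space, with no Poincar\'e inequality needed --- while the paper's route yields a concrete dense family adapted to the decomposition $\D_A=(N_A^\perp\cap\D_A)\oplus N_A$. (Interestingly, the paper itself uses the same hereditary-separability fact, but only for $\RR_A\subset L^2(\d\mu)$, not for the graph.) One small caveat on your closing remark: in the spectral-calculus alternative it is self-adjointness, not the Poincar\'e inequality, that does the work; the spectral gap is never needed for completeness or separability, so that route does not make the lemma's stated hypothesis ``genuinely used'' either.
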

\begin{proof}
It is clear that $\left(\D_A,\|\cdot\|_{\D_A}\right)$ is pre-Hilbert. 
In order to prove completeness, let $\left(g_n\right)_{n \in \N}$ be a Cauchy sequence in $\left(\D_A,\|\cdot\|_{\D_A}\right)$.
Then $\left(g_n\right)_{n \in \N}$ and $\left(Ag_n\right)_{n \in \N}$ are Cauchy sequences with respect to $\|\cdot\|_{\mu,2}$.
Completeness of $L^2(\d\mu)$ implies that there exist some $g$ such that $\left\|g-g_n\right\|_{\mu,2} \rightarrow 0$ and some $G$
such that $\left\|G-Ag_n\right\|_{\mu,2} \rightarrow 0$. 
Since $A$ is closed, it follows $G = Ag$, and, in particular, $g \in \D_A$.
It remains to prove separability. 
Note that $\RR_A \subset L^2(\d\mu)$ is separable as a subset of a separable metric space.
Let $\left(f_n\right)_{n \in \N}$ be a dense subset of $\RR_A$, and let $\left(g_n\right)_{n\in \N}$ be a dense subset in
$\D_A \cap N_A$, where $N_A$ denotes the null-space of $A$ which is a closed subset of $L^2(\d\mu)$, since $A$ is closed.
For any set $S \subset \D_A$, let $A_{|S}$ denote the restriction of $A$ to $S$.
Then the set
$$\left(A_{|N_A^\perp \cap \D_A}^{-1}(f_n)\right)_{n \in \N} \bigcup \left(g_n\right)_{n \in \N}$$
is countably dense in $\left(\D_A, \|\cdot\|_{\D_A}\right)$.
For the proof, let $g \in \D_A$ be arbitrary. 
Such $g$ can be written as $g = g^\perp + g_0$ for some $g^\perp \in \D_A \cap N_A^\perp$ and some $g_0 \in N_A$.
It holds $\|Ag^\perp\|_{\mu,2} = \|Ag\|_{\mu,2}$. 
Now let 
$$g_k^\perp \subset \left\{A_{|N_A^\perp \cap \D_A}^{-1}(f_n):n \in \N\right\}$$ 
with $\|A(g_k^\perp-g^\perp)\|_{\mu,2}\rightarrow0$. 
Poincar\'e's inequality then gives $\|g_k^\perp - g^\perp\|_{\mu,2} \rightarrow 0$.
Furthermore, let $g_k^0 \subset \left\{g_n: n \in \N\right\}$ such that $\left\|g_k^0-g_0\right\|_{\mu,2} \rightarrow 0$.
Thus, $\left(g_k^\perp + g_k^0\right)$ is the desired approximation.
\end{proof}

\begin{proof}[Proof of Proposition \ref{prop: well-defined}]
By Lemma \ref{lemma: A}, $\D_A$ is closed under finite convex combinations and separable. 
That the convolution $g\ast K_h$ is contained in the $\|\cdot\|_{\D_A}$-closure of $\|K\|_{\TV}$ times the symmetric convex hull of $\left\{g(\cdot-y):\Arrowvert y\Arrowvert_2\leq h_0\right\}$ now follows the lines of the proof of Lemma 1 in \cite{gini08}, replacing $L^2(\d\Q)$ by $\left(\D_A,\|\cdot\|_{\D_A}\right)$. As concerns the first statement of Proposition \ref{prop: well-defined}, it remains to note that the above $\|\cdot\|_{\D_A}$-closure is again contained in $\D_A$ because of the completeness of $\left(\D_A, \|\cdot\|_{\D_A}\right)$ by Lemma \ref{lemma: A}.
\end{proof}

We now present the first result of this section. 
If not explicitly stated otherwise, $K:\R^d\rightarrow \R$ denotes subsequently some twice continuously differentiable kernel of compact support 
which admits the representation $K_{h,x}(z)=h^{-d}\tilde{K}(\Arrowvert x-z\Arrowvert_2/h)$.
Without loss of generality, we assume that the support of $K$ is the closed $d$-dimensional unit ball $B_0(1)$. Subsequently,
$
(\G(f))_{f\in\FF}
$ 
denotes a centered Gaussian process with covariance structure
$$
\mathrm{cov}\big(\G(f),\G(g)\big)\ =\ -\int \Big(\big[A^{-1}f\big]g+\big[A^{-1}g\big]f\Big)\d\mu,\ \ \ f,g\in\FF.
$$
\begin{theorem}\label{thm: smoothed functions}
Let $\left((X_t),\P_\mu\right)$ be a stationary and ergodic It\^o--Feller diffusion in $E\subseteq \R^d$ with non-empty interior $E\setminus\partial E$, 
satisfying the conditions (I)--(V) of Section 2. 
Assume that $A_{|N_A^\perp \cap \D_A}^{-1}\FF$ is a subset of $\WW^{2,2}(\d\mu)$ and possesses a $\mu$-integrable envelope $G$ of compact support $C$ in the interior of $E$. 
Let
$$
\tilde{h}_t^{(d)} := 
					t^{-1/d}\log (\e t)     
$$
If $\FF$ is pregaussian, then 
$$
\left(\H_{t,h_t}\big(A_{|N_A^\perp \cap \D_A}^{-1}f\big)\right)_{f\in\FF} \rightsquigarrow \left(\G(f)\right)_{f\in\FF} \text{ in } \ell^\infty(\FF),$$
 provided that $h_t=h_t^{(d)}\searrow 0$ and $\tilde{h}_t^{(d)}=O(h_t^{(d)})$.
\end{theorem}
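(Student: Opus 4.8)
The plan is to regard the smoothed process as the ordinary empirical diffusion process $\G_t$ of Theorem \ref{theo_preg} indexed by the transformed class $\FF_{h_t} := \{A(g_f\ast K_{h_t}):f\in\FF\}$, where $g_f := A_{|N_A^\perp\cap\D_A}^{-1}f$, and to establish weak convergence by verifying (i) convergence of the finite-dimensional distributions and (ii) asymptotic equicontinuity. Proposition \ref{prop: well-defined} guarantees that each $g_f\ast K_{h_t}$ lies in $\D_A$ and, more precisely, in the $\|\cdot\|_{\D_A}$-closure of $\|K\|_{\TV}$ times the symmetric convex hull of the translates $\{g_f(\cdot-y):\|y\|_2\le h_t\}$, so that every member of $\FF_{h_t}$ is a genuine element of $\RR_A$ and $\H_{t,h_t}(g_f) = \G_t(A(g_f\ast K_{h_t}))$ is well defined.

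For (i) I would run the martingale approximation and martingale CLT exactly as in the proofs of Theorems \ref{theo_preg} and \ref{theo_brack}: for fixed $f$ the Dynkin martingale associated with $g_f\ast K_{h_t}$ has predictable quadratic variation $\int_0^t\Gamma(g_f\ast K_{h_t})(X_u)\,\d u$, and the boundary term is of order $\|g_f\ast K_{h_t}\|_{\sup}/\sqrt t\lesssim h_t^{-d/2}/\sqrt t\to 0$ under the bandwidth condition. Since $g_f\in\WW^{2,2}(\d\mu)$, one has $g_f\ast K_{h_t}\to g_f$ and $\nabla(g_f\ast K_{h_t})\to\nabla g_f$ in $L^2(\d\mu)$ on the compact support $C$, whence $\E_\mu\Gamma(g_f\ast K_{h_t})\to\E_\mu\Gamma(g_f)$ and, by the Cram\'er--Wold device, the asymptotic covariances converge to $-\int([A^{-1}f]g+[A^{-1}g]f)\,\d\mu$, i.e. to the covariance structure of $\G$. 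This step is insensitive to the rate of $h_t\searrow 0$.

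The heart of the proof is the equicontinuity (ii). Writing $\phi := g_f-g_{\tilde f}$, the increment is $\G_t(A(\phi\ast K_{h_t}))$, and the Bernstein-type inequality (\ref{gl2}) furnishes a mixed sub-Gaussian/sub-exponential tail governed by the two scales $\sigma^2(A(\phi\ast K_{h_t})) = \E_\mu\Gamma(\phi\ast K_{h_t})$ and $\|A(\phi\ast K_{h_t})\|_{\sup}/\sqrt t$. For the first scale I would exploit that $\psi\mapsto(\E_\mu\Gamma(\psi))^{1/2} = \|\sqrt a\,\nabla\psi\|_{\mu,2}$ is a seminorm continuous on $(\D_A,\|\cdot\|_{\D_A})$ (by $\E_\mu\Gamma(\psi) = -2\langle\psi,A\psi\rangle_\mu\le\|\psi\|_{\D_A}^2$ together with ellipticity (III)); Proposition \ref{prop: well-defined}, compact support of $\phi$ and the bounds on $\pi$ in (V) then yield $\sigma(A(\phi\ast K_{h_t}))\le\|K\|_{\TV}\sup_{\|y\|_2\le h_t}(\E_\mu\Gamma(\phi(\cdot-y)))^{1/2}\lesssim\|\nabla\phi\|_{\mu,2}\lesssim d_\G(f,\tilde f)$. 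Thus the sub-Gaussian geometry of $\FF_{h_t}$ is uniformly dominated by the limiting metric $d_\G$, and the corresponding contribution is controlled by $\gamma_2(\overline\FF_\delta,d_\G)$, which tends to $0$ as $\delta\searrow 0$ by pregaussianness. For the second scale, moving both derivatives of the operator $A$ onto $\phi$ and estimating by Cauchy--Schwarz with $\|K_{h_t}\|_2\asymp h_t^{-d/2}$ produces the blow-up $\|A(\phi\ast K_{h_t})\|_{\sup}\lesssim h_t^{-d/2}\|\phi\|_{\WW^{2,2}(\d\mu)}$.

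I would then close the argument with a two-metric generic chaining for processes with Bernstein increments (\cite{tala05}), bounding $\E_\mu^*\sup_{\overline\FF_\delta}|\G_t(A(\phi\ast K_{h_t}))|$ by the sum of the $\gamma_2$-functional in $d_\G$ and a sub-exponential $\gamma_1$-type functional carrying the prefactor $h_t^{-d/2}/\sqrt t$. The bandwidth condition $\tilde h_t^{(d)} = t^{-1/d}\log(\e t) = O(h_t)$ is designed precisely so that $h_t^{-d/2}/\sqrt t\lesssim(\log\e t)^{-d/2}\to 0$, the logarithmic gain $(\log\e t)^{-d/2}$ being what absorbs the logarithmic entropy factors produced along the sub-exponential branch of the chaining. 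The main obstacle is exactly this second scale: because the generator differentiates the kernel twice, the sup-norm is inflated by $h_t^{-d/2}$, and one cannot tie it to the small metric $d_\G$ (recall $d_\G\lesssim\|A\cdot\|_{\mu,2}$ but \emph{not} conversely, so the variance may not be replaced by $\|A\cdot\|_{\mu,2}$). The art is therefore to run the mixed chaining so that the sub-Gaussian part stays pinned to $d_\G$ -- allowing pregaussianness alone to make it vanish -- while the inflated sub-exponential part is annihilated by the factor $h_t^{-d/2}/\sqrt t\to 0$ supplied by the bandwidth, checking that the $\WW^{2,2}(\d\mu)$-entropy of $A^{-1}\FF$ entering the $\gamma_1$-branch is finite enough (guaranteed by the compactly supported $\mu$-integrable envelope $G$) for this cancellation to take effect.
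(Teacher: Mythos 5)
Your Step I is essentially sound (and close to the paper's: the paper compares $M^g_{t,h_t}$ with the unsmoothed Dynkin martingale $M^g_t$ via Bhattacharya's variance formula and the identity (\ref{eq: identity}), while you compute $\E_\mu\Gamma(g_f\ast K_{h_t})\to\E_\mu\Gamma(g_f)$ directly; one caveat is that your bound $\|g_f\ast K_{h_t}\|_{\sup}\lesssim h_t^{-d/2}$ presupposes an $L^2$-bound on $g_f$, whereas the paper controls the boundary terms in expectation via Fubini and the $\mu$-integrable envelope, which is what the hypotheses actually give). The genuine gap is in Step II. Your plan is a two-metric generic chaining over $\FF$ for the mixed Bernstein tails of Lezaud's inequality: a $\gamma_2$-branch in $d_\G$ plus a sub-exponential $\gamma_1$-branch carrying the prefactor $h_t^{-d/2}/\sqrt t$. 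Pregaussianness controls only $\gamma_2(\overline\FF_\delta,d_\G)$; it gives no control whatsoever of a $\gamma_1$-functional, and your claim that the compactly supported $\mu$-integrable envelope $G$ makes the relevant ``$\WW^{2,2}(\d\mu)$-entropy finite enough'' is false -- an envelope bounds no covering numbers, and a class with integrable envelope can have infinite entropy at every scale. Consequently the $\gamma_1$-term may be infinite, and the vanishing factor $(\log \e t)^{-d/2}$ cannot absorb it: the product of a null sequence with a possibly infinite functional is not $o(1)$. Under the theorem's hypotheses (pregaussianness plus envelope, no entropy condition) your chaining cannot be closed.

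The paper's proof is built precisely to avoid this obstacle, and at this point it takes a fundamentally different route. Having reduced to the triangular array of Dynkin martingales $(t^{-1/2}M^g_{t,h_t})_{g\in\GG}$, it does \emph{not} apply Lezaud's inequality to the increments over $\FF$. Instead it introduces the quadratic-variation ratio $\|M_{t,h_t}\|_{d_\G}$ and splits on the event $\{\|M_{t,h_t}\|_{d_\G}\le K\}$. On that event, Bernstein's inequality for \emph{continuous local martingales} makes the map $g\mapsto t^{-1/2}M^g_{t,h_t}$ genuinely sub-Gaussian with respect to $Kd_\G$ -- there is no sub-exponential branch at all -- so pregaussianness alone kills this term. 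The entire localization difficulty ($|K_{h_t}|$ having exploding sup-norm) is thereby shifted into showing $\P_\mu(\|M_{t,h_t}\|_{d_\G}>K)\to 0$, which by It\^o's formula and H\"older reduces to the occupation-measure bound
$$
\sup_{y \in C}\left\Arrowvert\frac{1}{t}\int_0^t a(X_u)\arrowvert K_{h_t}\arrowvert (y-X_u)\d u\right\Arrowvert_F = O_\P(1).
$$
This is a supremum over a \emph{finite geometric} index set (an $h_t$-net of $C$ of cardinality $\sim h_t^{-d}$), not over the abstract class $\FF$, and it is handled by centering, Lezaud's inequality with the Dalalyan--Rei\ss{} variance bound of Proposition \ref{prop: variance} (the $\zeta_d$-gain, which your proposal never invokes), and Pisier's maximal inequality, yielding (\ref{eq: chaining ex}); this is exactly where the bandwidth condition $\tilde h_t^{(d)}=O(h_t)$ enters. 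In short: your sub-Gaussian branch pinned to $d_\G$ matches the paper's mechanism in spirit, but the sup-norm blow-up must be quarantined in a finite-net occupation-time estimate, not fed into a $\gamma_1$-chaining over $\FF$, because nothing in the hypotheses controls the latter.
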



\paragraph{\sc Proof} {\it {\large Step I.}} (Convergence of the finite-dimensional distributions)

Recall the definition (\ref{def:H}).
Denote $\GG=A^{-1}_{|N_A^\perp \cap \D_A}\FF\subset \D_A\cap N_A^{\perp}$. Since $X$ is an It\^o--Feller diffusion, $\CC_K^2\subset\D_A$, and we have $g \ast K_h \in \D_A$ for every $g\in\GG$ and $h$ sufficiently small. Hence, $\int A(g \ast K_h) \d\mu = \int Ag\d\mu = 0$.
By Dynkin's formula, $(M^g_t)_{t \geq 0}$ with $M_t^g=g(X_t)-g(X_0)-\int_0^t Ag(X_u)\d u$ is a martingale,
and letting
\begin{equation}\label{eq: Dynkin}
M^g_{t,h_t} := g\ast K_{h_t}(X_t)-g\ast K_{h_t}(X_0)- \int_0^t A (g\ast K_{h_t})(X_u)\d u,
\end{equation}
$\big(t^{-1/2}M_{t,h_t}^g(s)\big)_{0 \leq s \leq t}$ is a triangular array of martingales.
For any natural number $m$, let $g_1,...,g_m$ be some arbitrary collection of elements from $\GG$. 
We will prove that for $h=h_t\searrow 0$, 
\begin{equation}\label{eq: schritt 1}
\left(\begin{array}{l}
	\H_{t,h_t}(g_1)\\ \vdots \\ \H_{t,h_t}(g_m)
\end{array}\right)
=
\left(\begin{array}{l}
	\frac{1}{\sqrt t}M^{g_1}_{t}\\ \vdots \\ \frac{1}{\sqrt t}M^{g_m}_{t}
\end{array}\right)
+ o_\P(1)\ \ \ \text{as $t\rightarrow\infty$.}
\end{equation}
The convergence of the finite-dimensional distributions then follows from the martingale CLT.
For note that by the assumption of the integrable envelope,
\begin{align}
\E_{\mu}^*\sup_{g\in\GG}\left|\H_{t,h_t}(g)-\frac{1}{\sqrt t}M^{g}_{t,h_t}\right|  \ &\leq\ \frac{2}{\sqrt{t}}\E_{\mu}\big(G\ast \arrowvert K_{h_t}\arrowvert (X_0)\big)\nonumber\\
&=\  \frac{2}{\sqrt{t}}\int_E\int G(z-y)\arrowvert K_{h_t}\arrowvert (y)\d \lambda(y)\d\mu(z)\nonumber\\
&\lesssim\ \frac{2}{\sqrt{t}}\int_E\int G(z-y)\arrowvert K_{h_t}\arrowvert (y)\d \lambda(y)\d\lambda(z)\label{eq: equiv1}\\
&=\  \frac{2}{\sqrt{t}}\int\int_E G(x)d\lambda(x)\arrowvert K_{h_t}\arrowvert (y)d\lambda(y) \label{eq: Fubini}\\
&\lesssim\  \frac{1}{\sqrt{t}}\mathbf{E}_{\mu}G(X_0)\ \rightarrow\ 0\ \ \ \text{as $t\rightarrow\infty$}, \label{eq: equiv2}
\end{align}
where we used assumption (V) in (\ref{eq: equiv1}) and (\ref{eq: equiv2}) and Fubini's Theorem 
and the translation invariance of the Lebesgue measure $\lambda$ in (\ref{eq: Fubini}). Furthermore, 
\begin{align}
		\P_\mu\bigg(\max_{i=1,...,m}& \left|\frac{1}{\sqrt t}M_{t,h_t}^{g_i}(t)-\frac{1}{\sqrt t}M_t^{g_i}(t)\right|>\ep\bigg) \nonumber\\
		&\leq\  m \max_{i=1,...,m}\P_\mu\left(\left|\frac{1}{\sqrt t}\left(M^{g_i}_{t,h_t}(t)-M^{g_i}_t(t)\right)\right| > \ep\right)\nonumber\\
		&\leq\ \frac{m}{\ep^2} \max_{i=1,\ldots,m} \E_\mu\left(\frac{1}{\sqrt t}\left(M^{g_i}_{t,h_t}(t)-M^{g_i}_t(t)\right)\right)^2\nonumber\\ 
		&=\ -\frac{m}{\ep^2}\max_{i=1,\ldots,m}2\ \int(g_i-g_i\ast K_{h_t})A\big(g_i-g_i\ast K_{h_t}\big)\d\mu,\label{eq: Bhatt}
\end{align}
where the expression for the variance in (\ref{eq: Bhatt}) is deduced in Bhattacharya (1982). 

As it appears useful for later purposes, we establish at this point the identity
\begin{equation}\label{eq: identity}
-2 \int gAg\d\mu\ =\ \int(\nabla_wg)\tt a(\cdot)\nabla_wg \d\mu 
\end{equation}
for all $g\in\WW^{2,2}(\d\mu)$ of compact support in $E\setminus\partial E$, with the matrix-valued function $a(\cdot)$ of the representation (\ref{eq: A}).
Indeed, the identity (\ref{eq: identity}) is known to be valid for arbitrary $g\in \CC_K^2$, because of $\E_{\mu}A(g^2)=0$ and (\ref{eq: cdc}). 
Let $g\in\WW^{2,2}(\d\mu)$ of compact support in the interior of $E$ be arbitrary, and let $\phi_h(\cdot)=h^{-d}\phi(\cdot/h)$ be a Dirac sequence, 
where $\phi$ is assumed to be compactly supported and twice continuously differentiable. 
Thus, for sufficiently small $h$, $g\ast \phi_h\in \CC_K^2$, and (\ref{eq: identity}) holds true for $g\ast\phi_h$. 
Since the diffusion coefficient matrix is locally bounded, $a$ is uniformly bounded on compacts coordinatewise, hence,
\begin{align}
\int\big(&\nabla\big(g-g\ast \phi_h\big)\big)\tt a\big(\nabla\big(g-g\ast \phi_h\big)\big)\pi\d\lambda \nonumber\\ 
&=\ \int\big(\nabla_wg-(\nabla_wg)\ast \phi_h\big)\tt a\big(\nabla_wg-(\nabla_wg)\ast \phi_h\big)\pi\d\lambda \label{eq: wd1}\\
&= \ o(1)\ \ \text{as $h\searrow 0$}\nonumber
\end{align}
since $\phi_h$ is a Dirac sequence (cf. Theorem 8.14 in \cite{foll99}), where we used Lemma \ref{lemma: wd} in (\ref{eq: wd1}). 
As in Lemma \ref{lemma: sobolev}, one can show that $$-2\,\int gAg\d\mu=\lim_{h\searrow 0}-2\E_{\mu}(g\ast \phi_h)A(g\ast\phi_h),$$ 
which proves the 
identity (\ref{eq: identity}). 

Let $\Arrowvert \cdot\Arrowvert_F$ denote the Frobenius norm.
Using now (\ref{eq: identity}), Lemma \ref{lemma: wd} and the fact that $\sup_{y\in  C}\Arrowvert a(y)\pi(y)\Arrowvert_{F}$ is bounded, expression (\ref{eq: Bhatt}) 
is bounded by\begin{align*}
\lesssim\ \int\big(\nabla_wg_i(x)-(\nabla_wg_{i})\ast K_{h_t}(x)\big)\tt\left(\nabla_wg_i(x)-(\nabla_w g_{i})\ast K_{h_t}(x)\right)\d \lambda(x) &= o(1)
\end{align*}
as $t\rightarrow\infty$, since $( K_{h_t})$ defines a (possibly not non-negatively valued) Dirac sequence. This proves finally (\ref{eq: schritt 1}).

\paragraph{\it Step II} (Asymptotic equicontinuity)

Since 
$$\E_{\mu}^*\sup_{g\in\GG}\left|\H_{t,h_t}(g)-\frac{1}{\sqrt t}M^{g}_{t,h_t}\right| \ \rightarrow \ 0 \text{ as } t \ \rightarrow \ \infty$$
as shown in Step I,
it is sufficient to prove the result for the triangular array of approximating martingales $(t^{-1/2}M^g_{t,h_t})_{g\in\GG}$. Define 
$$
\Arrowvert M_{t,h_t}\Arrowvert_{d_{\G}}\ :=\  \sup_{g,\tilde{g}\in\GG: d_\G(g,\tilde g) > 0}\frac{\sqrt{\langle t^{-1/2}M^g_{t,h_t}-t^{-1/2} M^{\tilde{g}}_{t,h_t}\rangle_t}}{d_{\G}(g,\tilde{g})}.
$$
For any fixed $K>0$ and $\varepsilon>0$,
\begin{align*}
\limsup_{t\rightarrow\infty}\,&\P_{\mu}\bigg(\sup_{d_{\G}(g,\tilde{g})\leq \delta}\frac{1}{\sqrt t}\big\arrowvert M^g_{t,h_t}-M^{\tilde{g}}_{t,h_t}\big\arrowvert > \varepsilon\bigg)\\
&\leq\ \limsup_{t\rightarrow\infty}\,\P_{\mu}\bigg(\sup_{d_{\G}(g,\tilde{g})\leq \delta}\frac{1}{\sqrt t}\big\arrowvert M^g_{t,h_t}-M^{\tilde{g}}_{t,h_t}\big\arrowvert > \varepsilon ; \ 
\Arrowvert M_{t,h_t}\Arrowvert_{d_{\G}} \leq K\bigg)\\ 
&\ \ \ \ \ + \limsup_{t\rightarrow\infty}\,\P_{\mu}\big(
\Arrowvert M_{t,h_t}\Arrowvert_{d_{\G}}> K\big)\\
&=:\ I\ +\ II,\ \ \ \text{say.}
\end{align*}
We first treat expression $I$.
It follows from Bernstein's inequality for continuous local martingales (see, e.g., \cite{revyor99}, p. 153) that
$$\P_\mu \left(\frac{1}{\sqrt t}\left|M^g_{t,h_t}-M^{\tilde g}_{t,h_t}\right| > \ep; \ \left\|M_{t,h_t}\right\|_{d_\G} \leq K\right) \leq 2\exp\left(-
\frac{\ep^2}{2K^2d_\G^2(g,\tilde g)}\right),$$
that is, the random map $g \mapsto \frac{1}{\sqrt t}M^g_{t,h_t}\mathds{1}\big\{\left\|M_{t,h_t}\right\|_{d_\G}\leq K\big\}$ is subgaussian with respect to
$Kd_\G$. Hence by pregaussianness, $\lim_{\delta\searrow 0}\limsup_{t\rightarrow\infty}\,I = 0$.


We now study expression $II$.  Recall the representation of $A$ according to (\ref{eq: A}). By It\^o's formula, 
\begin{align}
\frac{1}{t}\big\langle M_{t,h_t}^g\big\rangle_t\ 
&=\ \frac{1}{t}\int_0^t \left(\left(\nabla_w g\right) \ast K_{h_t}(X_u)\right)\tt a(X_u)\left(\left(\nabla_w g\right) \ast K_{h_t}(X_u)\right)\d \lambda(u)\nonumber\\
&\leq \Arrowvert K\Arrowvert_{\lambda,1}\frac{1}{t}\int_0^t \big(\left(\nabla_wg\right)\tt a(X_u)\left(\nabla_wg\right)\big)\ast \arrowvert K_{h_t}\arrowvert(X_u)\d \lambda(u) \label{eq: qv2}\\
&= \Arrowvert K\Arrowvert_{\lambda,1}\int_{C} \left(\nabla_w g(y)\right)\tt \left(\frac{1}{t}\int_0^t a(X_u)\arrowvert K_{h_t}\arrowvert (y-X_u)\d u\right)\left(\nabla_w g\right)(y)\d \lambda(y),\nonumber
\end{align}
where we used H\"older's inequality in (\ref{eq: qv2}). It remains to verify that
$$
\sup_{g \in \GG}\frac{\int_{ C} \left(\nabla_wg(y)\right)\tt \left(\frac{1}{t}\int_0^t a(X_u)\arrowvert K_{h_t}\arrowvert (y-X_u)\d u\right)\left(\nabla_wg(y)\right)\d \lambda(y)}{\int_E 
\left(\nabla_wg\right)\tt a\left(\nabla_wg\right)\pi\d \lambda} = O_\P(1).$$

Let $x_1,...,x_{N_{h_t}}$ be an $h_t$-net of $C$ with respect to the Euclidean distance, whence $N_h\sim h_t^{-d}$ by compactness of $C$. 
The uniform ellipticity condition and the fact that $\inf_{y\in C}\pi(y)>0$ imply that the last equation holds true whenever
\begin{align}
\sup_{y \in C}&\left\Arrowvert\frac{1}{t}\int_0^t a(X_u)\arrowvert K_{h_t}\arrowvert (y-X_u)\d u\right\Arrowvert_F\nonumber\\
&=\ \sup_{k=1,...,N_{h_t}}\sup_{y\in B_{x_k}(h_t)}\left\Arrowvert\frac{1}{t}\int_0^t a(X_u)\arrowvert K_{h_t}\arrowvert (y-X_u)\d u\right\Arrowvert_F\nonumber\\
&\leq \ \sup_{k=1,...,N_{h_t}}\left\Arrowvert\frac{1}{t}\int_0^t a(X_u)\sup_{y\in B_{x_k}(h_t)}\arrowvert K_{h_t}\arrowvert (y-X_u)\d u\right\Arrowvert_F\nonumber\\
&\leq \ \sup_{k=1,...,N_{h_t}}\left\Arrowvert\frac{1}{t}\int_0^t a(X_u)h_t^{-d}\mathds{1}_{B_{x_k}(2h_t)}(X_u)\d u\right\Arrowvert_F\nonumber\\
&\leq 
\sup_{k=1,...,N_{h_t}}\left\Arrowvert\frac{1}{t}\int_0^t a(X_u)h_t^{-d}\mathds{1}_{B_{x_k}(2h_t)})(X_u)\d u\ - \int a(x)h_t^{-d}\mathds{1}_{B_{x_k}(2h_t)}(x_k-x)\pi(x)\d x\right\Arrowvert_F\label{eq: final}\\
&\ \ \ \  +\ \sup_{k=1,...,N_{h_t}}\left\Arrowvert\int a(x)h_t^{-d}\mathds{1}_{B_{x_k}(2h_t)} (x_k-x)\pi(x)\d x\ \right\Arrowvert_F\nonumber\\
& =\ O_\P(1).\nonumber
\end{align}
But 
$$
\sup_{y \in C}\left\Arrowvert\int a(x) h_t^{-d}\mathds{1}_{B_y(2h_t)} (y-x)\pi(x)\d x \right\Arrowvert_F \lesssim \sup_{y \in  C}\Arrowvert a(y)\Arrowvert_F.
$$
Hence, it remains to prove that expression (\ref{eq: final}) is bounded in probability. This turn out to be a crucial step in the proof. For this purpose, first empirical process techniques are employed which cover any dimension $d\geq 2$. One requisite is Proposition 1 in Dalalyan and \cite{dalrei07}, rephrased for our notation:

\begin{proposition}\label{prop: variance}
	Let $C \subset E\subset\R^d$ be bounded and assume that $\mu \leq \tilde C\lambda$ on $C$ for some positive constant $\tilde C$.  		Then,
			\begin{align*}
\Var_\mu&\left(\frac{1}{\sqrt t}\int_0^t \delta^{-d}\mathds{1}_{B_y(\delta)}(X_u)\d u\right)\  \leq\ 		
			D' \delta^{-2d}\lambda\big(B_y(\delta)\big)^2 \zeta_d^2\big(\lambda(B_1(\delta))\big)
\end{align*}
with
$$
\zeta_d(x) := \begin{cases}
					\max\left\{1, (\log(1/x))^2\right\} & \text{if } d=2,\\
					x^{1/d-1/2}     & \text{if } d\geq 3,
                    \end{cases}
$$
where $D'$ denotes some constant which depends on $c_P, C_0, d$ and $\tilde{C}$ only.
\end{proposition}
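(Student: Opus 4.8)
The plan is to reduce the variance of the additive functional to a single time integral of the stationary autocovariance, and then to bound that autocovariance by interpolating between a short-time heat-kernel estimate and the long-time spectral gap. Write $g:=\delta^{-d}\mathds{1}_{B_y(\delta)}$, $V:=\lambda(B_y(\delta))=\lambda(B_1(\delta))\asymp\delta^d$, $\bar g:=g-\int g\,\d\mu$, and set
$$\rho(r):=\cov_\mu\big(g(X_0),g(X_r)\big)=\langle \bar g,P_r\bar g\rangle_\mu.$$
By stationarity, $\tfrac1t\Var_{\P_\mu}\big(\int_0^t g(X_u)\,\d u\big)=2\int_0^t(1-r/t)\rho(r)\,\d r$. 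Symmetry (IV) together with the semigroup property give $\rho(r)=\|P_{r/2}\bar g\|_{\mu,2}^2\ge 0$, so the left-hand side is dominated by $2\int_0^\infty\rho(r)\,\d r$. Hence the claim reduces to
$$\int_0^\infty \rho(r)\,\d r\ \lesssim\ \delta^{-2d}V^2\zeta_d^2(V),$$
with an implied constant depending only on $c_P,C_0,d,\tilde C$, since $\rho$ already carries the factor $\delta^{-2d}$ through $g$.

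I would next record two complementary bounds on $\rho$. First, $\rho(r)\le \langle g,P_r g\rangle_\mu=\delta^{-2d}\int_{B_y(\delta)}P_r\mathds{1}_{B_y(\delta)}\,\d\mu$; combining the $L^\infty$-contractivity $P_r\mathds{1}_{B_y(\delta)}\le 1$ with the transition density bound (II) (for $x,z\in B_y(\delta)$ one has $\|x-z\|_2^2\le 4\delta^2$, so $p_r(x,z)\le C_0(r^{-d/2}+u^{3d/2})$ with $u=\max(r,4\delta^2)$, the $u^{3d/2}$-term being negligible on the relevant range) together with $\mu\le\tilde C\lambda$ on $C$ yields the short-time estimate
$$\rho(r)\ \lesssim\ \delta^{-2d}\,V\,\min\{1,\,r^{-d/2}V\}.$$
Second, Poincar\'e's inequality (I) and symmetry give the contraction $\rho(r+s)\le \e^{-s/c_P}\rho(r)$ for all $r,s\ge 0$, which turns the slowly decaying short-time bound into an integrable tail.

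Finally I would split $\int_0^\infty\rho\,\d r=\int_0^\tau+\int_\tau^\infty$ at a fixed threshold $\tau\asymp 1$. On $[\tau,\infty)$ the contraction gives $\int_\tau^\infty\rho(r)\,\d r\le c_P\rho(\tau)\lesssim \delta^{-2d}V^2$. On $[0,\tau]$ I would insert the short-time bound and split once more at the crossover $r=V^{2/d}$ of the minimum: the part $r\le V^{2/d}$ contributes $\delta^{-2d}V\cdot V^{2/d}$, while the part $V^{2/d}\le r\le\tau$ contributes $\delta^{-2d}V^2\int_{V^{2/d}}^\tau r^{-d/2}\,\d r$. The elementary integral $\int_{V^{2/d}}^\tau r^{-d/2}\,\d r$ is exactly where the dimension enters: it is of order $\log(1/V)$ when $d=2$ and of order $V^{2/d-1}$ when $d\ge 3$. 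For $d\ge3$ this reproduces $V^2\zeta_d^2(V)=V^{1+2/d}$ precisely, and for $d=2$ the resulting $V^2\log(1/V)$ is comfortably absorbed into $V^2\zeta_2^2(V)$.

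The \emph{delicate point} — and the only place where the dimensional regularity phenomenon becomes visible — is the balance at the crossover scale $r\asymp V^{2/d}\asymp\delta^2$: below it the occupation density is essentially saturated (bound $1$), above it the heat kernel spreads like $r^{-d/2}$, and the time integral of this spreading is borderline divergent in low dimension. Controlling it requires \emph{both} the sharp short-time constant from (II) and the spectral gap to cut off the tail; neither the crude bound $\rho(r)\le \e^{-r/c_P}\Var_\mu(g)$ (too lossy near $r=0$) nor the heat-kernel estimate alone (non-integrable tail) would suffice.
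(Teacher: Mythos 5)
Your proof is correct. Note, however, that the paper does not actually prove this proposition: it is imported verbatim (``rephrased for our notation'') as Proposition 1 of \cite{dalrei07}, so there is no internal proof to compare against. Your argument --- reducing the variance to $2\int_0^\infty\langle \bar g,P_r\bar g\rangle_\mu\,\d r$ via stationarity and reversibility, bounding the covariance by the transition-density estimate (II) at short times and by the Poincar\'e/spectral-gap contraction (I) at long times, and balancing the two at the crossover $r\asymp V^{2/d}\asymp\delta^2$ --- is exactly the mechanism behind the cited result, and all steps check out: the tail integral $\int_\tau^\infty\rho\le c_P\rho(\tau)\lesssim\delta^{-2d}V^2$ is valid since $P_{r/2}\bar g$ remains mean-zero, the $u^{3d/2}$-term in (II) is indeed harmless for $r\le\tau\asymp1$ and $\delta$ small, the $d\ge3$ exponent $V^{1+2/d}=V^2\zeta_d^2(V)$ is reproduced exactly, and your $d=2$ bound $V^2\log(1/V)$ is even sharper than the stated $V^2\zeta_2^2(V)$.
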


First observe that
\begin{equation}\label{eq: zeta_2}
\zeta_d\big(\lambda(B_x(h_t))\big)\ \lesssim\ \zeta_d(h_t^{d}).
\end{equation}
For given fixed $i,j \in \{1,\ldots,d\}$, define for $x\in C$ and $t,h>0$,
\begin{equation}
Z_{x,t}^{h_t}\ :=\ \sqrt t\left(\frac{1}{t}\int_0^t a_{ij}(X_u)
h_t^{-d}\mathds{1}_{B_x(2h_t)}(X_u)\d u - \E_\mu a_{ij}(X_0)h_t^{-d}\mathds{1}_{B_x(2h_t)}(X_0)\right).\label{eq: Z}
\end{equation}
Now, 
(\ref{eq: zeta_2}) and Proposition \ref{prop: variance} imply that
$
		\Var_\mu\big(Z_{x,t}^{h_t}\big) \lesssim \sup_{z \in C}|a_{ij}(z)|^2 \zeta_d^2(h_t^d)
$
for all $x \in C$ and $t>0$.  With 
\begin{align*}
\sigma_{2,h}^2(x)\ :=&\ \lim_{t\rightarrow \infty}\Var_{\P_\mu}(Z_{x,t}^h)\ 
\lesssim\  \sup_{z \in C}|a_{ij}(z)|^2 \zeta_d^2(h^d)
\end{align*}
and
\begin{align*}
c_{\infty,h}\ :=&\  \sup_{z \in C}\big|a_{ij}(z)h^{-d}\mathds{1}_{B_z(2h)}(z)\big|\ \lesssim  h^{-d},
\end{align*}
Lezaud's (2001) Bernstein-type inequality yields the exponential tail bound
$$\P_\mu\left(\left|Z_{x,t}^{h_t}\right|>u\right)\ \leq\  2\exp\left(-\frac{u^2/2}{\sigma_{2,h_t}^2(x)+c_Pc_{\infty, h_t}u/\sqrt t}\right) \qquad \forall u>0.$$
Thus, the same decomposition as in the proof of Theorem \ref{theo_preg} and application of Pisier's maximal inequality to each of the terms subsequently provide the bound
\begin{align}\label{eq: chaining ex}
\E_\mu\sup_{k=1,...,N_{h_t}}&\frac{1}{\sqrt{t}}|Z_{x_k,t}^{h_t}\arrowvert \ \lesssim\  d^4 \sup_{z \in C}|a_{ij}(z)|\frac{1}{\sqrt{t}}\left(\zeta_d(h_t^d)+\frac{\sqrt{\log(eh_t^{-1})}}{h_t^d\sqrt{t}}\right)\sqrt{\log(eh_t^{-1})}. 
\end{align}
Turning back to the proof of Theorem \ref{thm: smoothed functions}, it suffices to note that expression (\ref{eq: final}) 
is bounded in probability whenever $(h_t)$ is chosen as stated in the formulation of Theorem \ref{thm: smoothed functions}, 
due to (\ref{eq: chaining ex}). 
This finally completes the proof.\hfill$\square$

\begin{remark}
	Note that the condition $\sqrt n \Var(\hat p_n(x)) \rightarrow 0$ which is required using the decomposition given in (\ref{radweg}) is connected with the lower bound $h_n \sim n^{-1/2d}$ on the bandwidth for
	the classical smoothed empirical process (the order of the variance is uniform over $x$)
	while in the present Theorem \ref{thm: smoothed functions} the bandwidth $h_t \sim t^{-1/d}\log(\e t)$ is admissible.
\end{remark}

The efficient use of Pisier's inequality requires some kind of uniformity in the tail decay in $t$ and $h_t$, and it is the basic tool for the maximal inequality based on classical chaining techniques. In the present situation, the random variables $Z_{x,t}^{h_t}$ are very localized with increasing sharp maximum for $h_t$ decreasing to zero, i.e. with exploding supremum norm. Thus, although the variances of the increments of the empirical diffusion process are much smaller than those of the classical empirical process, it appears that the use of a refined maximal inequality by $l_2-l_{\infty}$-chaining does not lead to any tighter bound. The results obtained by the techniques of the proof of Theorem \ref{thm: smoothed functions} are not optimal in the one-dimensional case which suggests that improvement is still possible by a different approach. Therefore, we revisit the bound on 
\begin{equation}\label{eq: central}
\sup_{y \in C}\left\Arrowvert\frac{1}{t}\int_0^t a(X_u)\arrowvert K_{h_t}\arrowvert (y-X_u)\d u\right\Arrowvert_F
\end{equation}
in a very specific 
two-dimensional case, employing results from fractal analysis of planar Brownian motion. 

\begin{proposition}\label{prop: dembo}
Let $W$ denote some planar Brownian motion. Let $h_t\searrow 0$ with $\exp(-t)=O(h_t)$. Then
$$
\limsup_{t\rightarrow\infty}\sup_{x\in C}\,\frac{1}{t}\int_0^t\frac{\mathds{1}_{B_x(h_t)}(W_u)}{h_t^2}\d u\ =\ O_\P(1).
$$ 
\end{proposition}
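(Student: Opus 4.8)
The plan is to use Brownian scaling to transfer the problem to the occupation measure of a standard planar Brownian motion over the fixed interval $[0,1]$, and then to feed in the maximal occupation (``thick point'') estimates from the multifractal analysis of planar Brownian motion. Writing $\nu_s(A):=\int_0^s\mathds 1_A(W_u)\,\d u$ for the occupation measure and $\tilde W_s:=t^{-1/2}W_{ts}$, the process $\tilde W$ is again a standard planar Brownian motion, and the substitution $u=ts$ yields
$$
\frac1t\int_0^t\frac{\mathds 1_{B_x(h_t)}(W_u)}{h_t^2}\,\d u\ =\ \frac1t\cdot\frac{\tilde\nu_1\big(B_{x/\sqrt t}(h_t/\sqrt t)\big)}{(h_t/\sqrt t)^2}.
$$
With $\ep_t:=h_t/\sqrt t$ and the shrinking index set $C_t:=t^{-1/2}C$, the assertion becomes
$$
\sup_{x\in C}\frac1t\int_0^t\frac{\mathds 1_{B_x(h_t)}(W_u)}{h_t^2}\,\d u\ =\ \frac1t\sup_{y\in C_t}\frac{\tilde\nu_1(B_y(\ep_t))}{\ep_t^2},
$$
so that everything reduces to a suitable upper bound on the rescaled maximal occupation $\sup_{y\in C_t}\ep_t^{-2}\tilde\nu_1(B_y(\ep_t))$.

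First I would discretise: cover $C_t$ by an $\ep_t$-net $\{y_1,\dots,y_{N_t}\}$ with $N_t\lesssim(\operatorname{diam}(C_t)/\ep_t)^2\lesssim h_t^{-2}$, and use $\tilde\nu_1(B_y(\ep_t))\le\tilde\nu_1(B_{y_k}(2\ep_t))$ whenever $y$ lies in the $k$-th net cell; it then suffices to control $\max_{k\le N_t}\ep_t^{-2}\tilde\nu_1(B_{y_k}(2\ep_t))$. The analytic core is an occupation estimate for planar Brownian motion at the (exponentially) small scale $\ep_t$, which must replace the spectral-gap Bernstein inequality (\ref{gl2}) and Proposition \ref{prop: variance}, both unavailable here since Brownian motion has neither a spectral gap nor a finite invariant measure. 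At this point I would invoke the fractal analysis of planar Brownian motion of Dembo, Peres, Rosen and Zeitouni: the occupation of an $\ep$-disk is built up, through excursions between concentric circles, from a hierarchy of roughly independent contributions whose number is of order $\log(1/\ep)$, and the attendant thick-point estimates bound the maximal normalised occupation $\sup_{y\in C_t}\ep_t^{-2}\tilde\nu_1(B_y(\ep_t))$ by a constant multiple of $\log(1/\ep_t)$, uniformly over the net and uniformly as $\ep_t\searrow0$.

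It then remains to translate this logarithmic bound back through the scaling and to insert the standing hypothesis. Since $\log(1/\ep_t)=\tfrac12\log t+\log(1/h_t)$, the assumption $\exp(-t)=O(h_t)$ --- equivalently $\log(1/h_t)\le t+O(1)$ --- is precisely what forces $\log(1/\ep_t)\lesssim t$. Combined with the prefactor $1/t$ coming from the scaling identity, the logarithmic occupation bound collapses to an $O_{\P}(1)$ statement, and taking the $\limsup$ over $t$ gives the claim. The hypothesis is thus sharp and calibrated: after dividing by $t$, the logarithmic growth of the maximal occupation at scale $\ep_t$ may contribute only a bounded factor.

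The hard part will be the uniform maximal-occupation estimate at the exponentially small scale $\ep_t$. Because $N_t\sim h_t^{-2}$ is of order $\exp(2t)$, the logarithmic entropy $\log N_t\sim t$ of the net is of the same order $\log(1/\ep_t)$ as the fluctuation scale of the occupation of a single disk; the union bound is therefore not wasteful and closes only if the single-disk occupation is controlled at exactly the right logarithmic order and the exponential tail rate is genuinely uniform as $\ep_t\searrow0$. The crux is to show that the maximum over the shrinking region $C_t$ stays at the typical one-logarithm occupation scale rather than being inflated to the global two-logarithm thick-point scale, and to do so through the excursion and (logarithmic) Green's-function structure of the plane rather than through any ergodic or spectral-gap mechanism; this is exactly the step for which the fractal analysis of planar Brownian motion is indispensable, and it is what dictates the bandwidth restriction $\exp(-t)=O(h_t)$.
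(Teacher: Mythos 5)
Your first and last steps coincide with the paper's proof: the same Brownian rescaling reduces everything to the unit-time occupation measure $\tilde\nu_1$ of a standard planar Brownian motion at scale $\ep_t=h_t/\sqrt t$, and the hypothesis $\exp(-t)=O(h_t)$ is used in the same way at the end, to absorb one factor $\log(1/\ep_t)\lesssim t$ into the prefactor $1/t$. The divergence is at the occupation estimate itself, and there your proposal has a genuine gap. The paper does not discretize and makes no attempt to exploit the smallness of the rescaled index set: it simply enlarges the supremum over centers in $t^{-1/2}C$ to the supremum over $\Arrowvert x\Arrowvert\leq 1$ and the single radius $\ep_t$ to $\sup_{r\leq h_t}$, and then quotes Theorem 1.2 of Dembo, Peres, Rosen and Zeitouni (2001) for the assertion that
$$
\sup_{r\leq h_t}\sup_{\Arrowvert x\Arrowvert\leq 1}\int_0^1\frac{\mathds{1}_{B_x(r)}(W_u)}{r^2\arrowvert\log r\arrowvert}\,\d u
$$
equals $C+o_\P(1)$. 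Your substitute for this citation is an $\ep_t$-net of $C_t$ with $N_t\sim h_t^{-2}$ points, a union bound, and the claim that ``thick-point estimates'' give $\sup_{y\in C_t}\ep_t^{-2}\tilde\nu_1(B_y(\ep_t))\lesssim\log(1/\ep_t)$. That one-logarithm claim is precisely what your route cannot deliver.

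Concretely: the thick-point theorem of Dembo, Peres, Rosen and Zeitouni for planar Brownian motion is a two-logarithm statement, $\sup_x\tilde\nu_1(B_x(\ep))/(\ep^2(\log\ep)^2)\rightarrow 2$ a.s.; their theory contains no one-logarithm bound for a supremum over centers. Your own entropy count shows why a net-plus-union-bound argument must stop at the two-log scale: the single-disk occupation has exponential tail at scale $\ep_t^2\log(1/\ep_t)$ (a geometric number of returns, with escape probability $\asymp 1/\log(1/\ep_t)$ per excursion), so the union bound over $N_t$ disks yields only
$$
\max_{k\leq N_t}\tilde\nu_1\big(B_{y_k}(2\ep_t)\big)\ \lesssim\ \ep_t^2\,\log(1/\ep_t)\,\log N_t\ \asymp\ \ep_t^2\big(\log(1/\ep_t)\big)^2,
$$
precisely because $\log N_t\asymp\arrowvert\log h_t\arrowvert\asymp\log(1/\ep_t)$ in the exponentially small bandwidth regime. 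Fed back through the scaling identity this gives $O_\P\big((\log(1/\ep_t))^2/t\big)$, which is $O_\P(1)$ only when $\arrowvert\log h_t\arrowvert=O(\sqrt t)$; it does not cover $h_t\sim e^{-t}$, the case the proposition is designed for. Nor can the shrinking of $C_t$ rescue the one-log claim, which you correctly flag as ``the crux'' but for which you supply no mechanism: localization near the starting point only buys the hitting probability $\asymp\log t/\log(1/\ep_t)$ of an $\ep_t$-disk at distance $\lesssim t^{-1/2}$ from the origin, a polynomial-in-$t$ gain that is powerless against the exponential factors $N_t\sim e^{2t}$ and $\P\big(\tilde\nu_1(B_y(\ep_t))>a\,\ep_t^2(\log(1/\ep_t))^2\big)\approx\ep_t^{\,a}$ in the first-moment computation, so the critical thick-point level remains $a=2$ over $C_t$ as well (the lower-bound construction uses only the $\asymp\log(1/\ep_t)$ excursion scales between $\ep_t$ and $t^{-1/2}$, of which essentially none are lost). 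In short, the decisive step of your argument is not merely left open: the mechanism you propose for it cannot produce it, and it is also not the step the paper performs, since the paper obtains the $r^2\arrowvert\log r\arrowvert$ normalization directly from the cited theorem while discarding the shrinking region altogether.
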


\paragraph{\sc Proof} 
Without loss of generality we assume $C\subset B_0(1)$. By change of variables and the rescaling property of Brownian motion, it holds for any 
$t\geq 1$, $h_t \leq 1$, 
\begin{align*}
\sup_{\Arrowvert x\Arrowvert\leq 1}\int_0^t\frac{\mathds{1}_{B_x(h_t)}(W_u)}{h_t^2(\arrowvert \log h_t\arrowvert   +\frac{1}{2}\log t)}\d u\ &=\ \sup_{\Arrowvert x\Arrowvert\leq 1}t\int_0^1\frac{\mathds{1}_{B_x(h_t)}(W_{tu})}{h_t^2\arrowvert \log h_t  -\frac{1}{2}\log t\arrowvert}\d u\\
&=\ \sup_{\Arrowvert x\Arrowvert\leq 1}\int_0^1\frac{\mathds{1}_{B_{x/\sqrt{t}}(h_t/\sqrt{t})}(t^{-1/2}W_{tu})}{(h_t/\sqrt{t})^2\arrowvert \log (h_t/\sqrt{t})\arrowvert}\d u\\
&\leq_{st.}\ \sup_{r\leq h_t}\sup_{\Arrowvert x\Arrowvert\leq 1}\int_0^1\frac{\mathds{1}_{B_{x}(r)}(W_{u})}{r^2\arrowvert \log r\arrowvert}\d u,
\end{align*}
and the latter term is bounded in probability -- precisely, equals $C+o_\P(1)$ for some constant $C>0$, due to Theorem 1.2, \cite{dematal01}. In particular, $$\sup_{x\in C}\,\frac{1}{t}\int_0^t\frac{\mathds{1}_{B_x(h_t)}(W_u)}{h_t^2}\d u\ =\ O_\P\left(\frac{\arrowvert \log h_t\arrowvert +\arrowvert \log t\arrowvert}{t}\right)\ =\ O_\P(1)$$
if $(h_t)$ is chosen as in the formulation of the proposition. \hfill$\square$

Some well-known technique for proving results about diffusion processes is to reduce the problem via martingale representation theorems to standard Brownian motion.
Thus, if the ergodic, stationary It\^o-Feller diffusion can be represented via some diffeomorphism $\Phi:\R^2\rightarrow\R^2$ from a  local martingale in $\R^2$ satisfying the conditions (I) -- (V), we may replace $\FF$ by $\FF\circ \Phi$ and $(X_t)$ by $(\Phi^{-1}(X_t))$ in the definition of the empirical diffusion process, which makes it sufficient to investigate the intermediate process for the local martingale $(\Phi^{-1}(X_t))$. 
The aim of the next result is to give a hint on further potential improvement of the bound on (\ref{eq: central}) for some specific case of local martingales.

\begin{theorem}
\label{thm: d=2}
Let $\left((X_t),\P_\mu\right)$ be some isotropic, local martingale in $\R^2$ which is representable as an analytic function of planar Brownian motion $B$ 
with $f'\not=0$, i.e.~$X=f(B)$. Let $F(u) := \langle X^{(1)}\rangle_u= \int_0^u\arrowvert  f'(B_s)\arrowvert^2 ds$ and define $\kappa_{C,t,r}$ to be
$$
\arg\max_{\kappa}\left\{\lambda\Big(\Big\{u\in[0,F(t)]: \arrowvert f'(B_{{F^{-1}(u) }})\arrowvert^{-2}_{}\geq \kappa\Big\}\Big)\geq Cr^2(\arrowvert \log r\arrowvert +\log F(t))\right\}
$$
for any $C>0$. 
Let $h_t\searrow 0$ satisfy
\begin{equation}\label{busch}
\frac{1}{h_t^2t}\int_0^{F(t)}\arrowvert f'(B)\arrowvert^{-2}_{F^{-1}(u)}\mathds{1}_{\big\{\arrowvert f'(B)\arrowvert^{-2}_{F^{-1}(u)}>\,\kappa_{
C(1+o_\P(1)),t,h_t}\big\}}\d u\ =\ O_\P(1)
\end{equation}
as $t\rightarrow\infty$ and suppose that conditions (I) -- (V) are satisfied. If
$$A_{|N_A^\perp \cap \D_A}^{-1}\FF\subset\WW^{2,2}(\d\mu)$$ 
possesses a $\mu$-integrable envelope $G$ of compact support $C\subset\R^2$ and $\FF$ is pregaussian, then 
$$
\left(\H_{t,h_t}\big(A_{|N_A^\perp \cap \D_A}^{-1}f\big)\right)_{f\in\FF} \rightsquigarrow \left(\G(f)\right)_{f\in\FF} \text{ in } \ell^\infty(\FF),$$
 provided that $h_t\searrow 0$ and $\exp(-t)=O(h_t)$.
\end{theorem}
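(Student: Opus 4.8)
The plan is to keep the two-step architecture of the proof of Theorem~\ref{thm: smoothed functions} and to change only the quantitative control of the quadratic-variation functional~(\ref{eq: central}), adapting it to the conformal structure $X=f(B)$. Writing $\GG=A^{-1}_{|N_A^\perp\cap\D_A}\FF$, I would first reduce the assertion to the triangular array of approximating martingales $(t^{-1/2}M^g_{t,h_t})_{g\in\GG}$: the integrable-envelope hypothesis gives $\E_\mu^*\sup_{g\in\GG}|\H_{t,h_t}(g)-t^{-1/2}M^g_{t,h_t}|\to0$ verbatim as in Step~I there, convergence of the finite-dimensional distributions follows from Dynkin's formula~(\ref{eq: Dynkin}) and the martingale CLT, and the identity~(\ref{eq: identity}) together with the vanishing of $g-g\ast K_{h_t}$ in the $\D_A$-limit identifies the limiting covariance with that of $(\G(f))_{f\in\FF}$. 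Asymptotic equicontinuity is then reduced, exactly as before through Bernstein's inequality for continuous local martingales and pregaussianness, to showing that the random Lipschitz constant $\|M_{t,h_t}\|_{d_\G}$ is $O_\P(1)$, which is governed by~(\ref{eq: central}).

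The novelty lies entirely in bounding $\sup_{y\in C}\|t^{-1}\int_0^t a(X_u)|K_{h_t}|(y-X_u)\d u\|_F$. Here I would use that an isotropic local martingale $X=f(B)$ with $f$ analytic has diffusion matrix $a(x)=|f'(f^{-1}(x))|^2 I$, so that $|f'(B_u)|^2=F'(u)$ is precisely the Jacobian of the Dubins--Schwarz time change; by the conformal invariance of planar Brownian motion one has $X_u=\beta_{F(u)}$ for a planar Brownian motion $\beta$, and the substitution $v=F(u)$ rewrites the weighted occupation as an occupation functional of $\beta$ in balls of radius $\sim h_t$ over the random horizon $[0,F(t)]$. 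For such occupation functionals Proposition~\ref{prop: dembo}, resting on the thick-point asymptotics of~\cite{dematal01}, supplies the sharp order $C\,h_t^2(|\log h_t|+\log F(t))$, and it is this that ultimately permits the exponentially small bandwidths $\exp(-t)=O(h_t)$.

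To keep the scaling legitimate when $f'$ degenerates -- so that the time-change rate $|f'(B_{F^{-1}(\cdot)})|^{-2}$ is unbounded and the pulled-back bandwidth is no longer comparable to $h_t$ -- I would split the occupation-times integral at the level $\kappa_{C(1+o_\P(1)),t,h_t}$. The quantile definition of $\kappa_{C,t,r}$ at the occupation budget $Cr^2(|\log r|+\log F(t))$ is designed precisely so that a rearrangement (layer-cake) bound dominates the Brownian occupation sitting on the atypical, slow part of the clock by the integral of the weight over its super-level set $\{|f'|^{-2}>\kappa\}$; this excess is the left-hand side of~(\ref{busch}) and is assumed $O_\P(1)$. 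On the complementary set the rate is at most $\kappa$, and there the renormalised occupation is controlled by Proposition~\ref{prop: dembo}, the random horizon being absorbed by monotonicity in time and the comparability $F(t)\asymp t$, which also transmits $\exp(-t)=O(h_t)$ to $\exp(-F(t))=O(h_t)$.

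I expect the main obstacle to be exactly this coupling of the random time change with the fractal occupation structure of planar Brownian motion. Making the rearrangement step rigorous means matching the extremal ball carrying the maximal occupation against the location-dependent weight $|f'|^{-2}$, and it is this matching that forces both the technical quantile definition of $\kappa_{C,t,r}$ and the $o_\P(1)$ perturbation of its constant, the latter accounting for the fluctuations around the limit in Proposition~\ref{prop: dembo}. A secondary difficulty is to control the occupation uniformly in $y\in C$ over the random horizon $[0,F(t)]$ rather than at a fixed deterministic time; I would handle it with a version of Proposition~\ref{prop: dembo} uniform over horizons in a window $[ct/2,2ct]$ that contains $F(t)$ with probability tending to one.
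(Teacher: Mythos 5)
Your proposal is correct and, in its overall architecture, follows the paper's proof: the same two-step reduction to the approximating martingales and to the bound on (\ref{eq: central}), the same Dambis--Dubins--Schwarz representation with the substitution $v=F(u)$, the same appeal to Proposition \ref{prop: dembo} (i.e.\ the thick-point asymptotics of \cite{dematal01}) over the horizon $[0,F(t)]$, and the same quantile/rearrangement step tied to (\ref{busch}); your handling of the random horizon via a deterministic window containing $F(t)$ with probability tending to one is in fact more careful than the paper, which simply invokes the proof of Proposition \ref{prop: dembo} with horizon $F(t)$. The one genuine difference sits in your second paragraph, and you do not seem to notice that it changes the logic: there you keep the weight $a(X_u)$ inside (\ref{eq: central}) and use that $a(X_u)\,\d u=\d\langle X^{(1)}\rangle_u=\d F(u)$, so the Jacobian of the time change cancels \emph{exactly} and the quantity to bound becomes the unweighted occupation $\sup_y h_t^{-2}\int_0^{F(t)}\mathds{1}_{B_y(h_t)}(\tilde B_v)\,\d v$, which Proposition \ref{prop: dembo} controls by $O_\P\big((|\log h_t|+\log F(t))\big)$; dividing by $t$ this is already $O_\P(1)$ under $\exp(-t)=O(h_t)$ and $\log F(t)=O_\P(t)$ (available from ergodicity), so your third paragraph --- the split at $\kappa_{C(1+o_\P(1)),t,h_t}$ and hypothesis (\ref{busch}) --- would then be superfluous. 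The paper proceeds differently: it bounds $a(X_u)$ by its supremum over a compact neighbourhood of $C$ \emph{before} the time change, which destroys the cancellation, and the surviving weight $|f'(B_{F^{-1}(u)})|^{-2}$ is precisely what the quantile $\kappa$ and condition (\ref{busch}) are designed to control, by the rearrangement bound you describe. In other words, your third paragraph reproduces the paper's argument, while your second paragraph is a cleaner route that buys independence from (\ref{busch}); the degeneracy of $f'$ that motivates your split is in any case excluded by the assumed condition (III), which forces $|f'\circ f^{-1}|^2\ge\alpha$ on $E$ and hence $\kappa\le 1/\alpha$. If you do retain the split, one small debt should be acknowledged: on the sub-level set your bound is $\kappa$ times the renormalised occupation, so you still need $\kappa(|\log h_t|+\log F(t))/t=O_\P(1)$; this is not supplied by Proposition \ref{prop: dembo} alone, but follows either from (\ref{busch}) itself (since $\kappa$ times the measure of the super-level set is dominated by the integral appearing there) or from condition (III) as above.
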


\begin{proof}
The proof follows the lines of the proof of Theorem \ref{thm: smoothed functions} until the bound on
(\ref{eq: central}). 
The process $(X_t)$ describes some isotropic, local martingale and possesses therefore  the representation
$
X_t = \tilde{B}\circ \langle X^{(1)}\rangle_t
$ 
with a Dambis--Dubins--Schwarz Brownian motion on some standard extension of $(\FF_t)$. Due to its specific structure, the quadratic variation process can be written as
$
\langle X^{(1)}\rangle_{.} = \int_0^{.}\arrowvert f'(B)\arrowvert^2\d s
$. 
By change of variables,
\begin{align*}
\frac{1}{t}\int_0^t\frac{\mathds{1}_{B_x(h_t)}\big(\tilde{B}_{\langle X^{(1)}\rangle_s}\big)}{h_t^2}\d s \ &=\  \frac{1}{t}\int_0^{\langle X^{(1)}\rangle_t}\arrowvert f'(B)\arrowvert^{-2}_{\langle X^{(1)}\rangle_u^{-1}}\frac{\mathds{1}_{B_{x}(h_t)}(\tilde{B}_u)}{h_t^2}\d u.
\end{align*}
Now, since 
$$
\sup_{\Arrowvert x\Arrowvert\leq 1}\int_0^{F(t)}\mathds{1}_{B_{x}(h_t)}(\tilde{B}_{u})\d u\ \lesssim\ h_t^2\big(\arrowvert \log h_t\arrowvert+\log F(t)\big)(1+o_\P(1))$$
by the proof of Proposition \ref{prop: dembo},
\begin{align*}
\sup_{x\in C}\frac{1}{t}\int_0^{\langle X^{(1)}\rangle_t}&\arrowvert f'(B)\arrowvert^{-2}_{\langle X^{(1)}\rangle_u^{-1}}\frac{\mathds{1}_{B_{x}(h_t)}(\tilde{B}_u)}{h_t^2}\d u\\ 
&\leq\ \frac{1}{h_t^2t}\int_0^{F(t)}\arrowvert  f'(B)\arrowvert^{-2}_{F^{-1}(u)}\mathds{1}_{\big\{\arrowvert f'(B)\arrowvert^{-2}_{F^{-1}(u)}>\,\kappa_{C(1+o_\P(1)),t,h_t}\big\}}\d u
\end{align*}
for some constant $C>0$, which proves the theorem.
\end{proof}

\section{The analysis of the smoothed empirical process}
The remarkable re\-gu\-la\-ri\-ty properties of the process $\H_{t,h}$ clarify the influence of the regularity behavior of the empirical measure for diffusions. 
Instead of following the guideline suggested by Theorem \ref{theo_preg} for the analysis of $\S_{t,h}$, 
it therefore appears useful to relate it to $\H_{t,h}$ under suitable smoothness assumptions on the coefficients of $A$. 
Before discussing the general approximation set-up, we present a simple condition ensuring the equality $A(g\ast K_h)= (Ag)\ast K_h$.

\begin{lemma}\label{lemma: uniformity}
Assume that for some fixed $h_0>0$, 
\begin{equation}\label{eq: uniformity}
\lim_{t\searrow 0}\sup_{z\in B_0(h_0)}
\left\Arrowvert\frac{P_tg(\cdot-z)-g(-z)}{t}-Ag(\cdot-z)\right\Arrowvert_{\mu,2}^2 = 0.  
\end{equation}
Then $g\ast K_h\in\D_A$,and $A(g\ast K_h)=(Ag)\ast K_h$ $\mu$-a.s. ($h\leq h_0$).
\end{lemma}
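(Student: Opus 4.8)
The plan is to realize the convolution as a Bochner integral of translates against the kernel, to commute this integral with both $P_t$ and the generator limit, and then to use the uniformity hypothesis (\ref{eq: uniformity}) to pass to the $L^2(\d\mu)$-limit underneath the integral sign. Throughout I fix $h\le h_0$, so that $\supp K_h = B_0(h)\subseteq B_0(h_0)$ and the control furnished by (\ref{eq: uniformity}) covers the entire range of integration.

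First I would write, as a Bochner integral in $L^2(\d\mu)$,
$$ g\ast K_h \;=\; \int_{B_0(h)} K_h(y)\, g(\cdot-y)\,\d\lambda(y). $$
This is legitimate because (\ref{eq: uniformity}) forces $g(\cdot-y)\in\D_A\subseteq L^2(\d\mu)$ for $y\in B_0(h_0)$, the space $L^2(\d\mu)$ is separable, and the integrand $y\mapsto K_h(y)\,g(\cdot-y)$ is strongly measurable and integrable over the compact set $B_0(h)$ (here assumption (V) keeps the translates controlled; this is a routine point). Since $P_t$ is a bounded linear operator on $L^2(\d\mu)$, it commutes with the Bochner integral, whence in $L^2(\d\mu)$
$$ \frac{P_t(g\ast K_h)-g\ast K_h}{t} \;=\; \int_{B_0(h)} K_h(y)\,\frac{P_t\,g(\cdot-y)-g(\cdot-y)}{t}\,\d\lambda(y). $$

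Next, taking as candidate limit $(Ag)\ast K_h=\int_{B_0(h)}K_h(y)\,(Ag)(\cdot-y)\,\d\lambda(y)$, I would estimate the $L^2(\d\mu)$-distance by Minkowski's integral inequality:
$$
\left\|\frac{P_t(g\ast K_h)-g\ast K_h}{t}-(Ag)\ast K_h\right\|_{\mu,2}
\le \int_{B_0(h)}|K_h(y)|\,\left\|\frac{P_t g(\cdot-y)-g(\cdot-y)}{t}-Ag(\cdot-y)\right\|_{\mu,2}\,\d\lambda(y).
$$
Bounding the inner norm by its supremum over $z\in B_0(h_0)$ and using $\int|K_h|\,\d\lambda=\|K\|_{\lambda,1}$, the right-hand side is at most
$$\|K\|_{\lambda,1}\,\sup_{z\in B_0(h_0)}\left\|\frac{P_t g(\cdot-z)-g(\cdot-z)}{t}-Ag(\cdot-z)\right\|_{\mu,2},$$
which tends to $0$ as $t\searrow 0$ by (\ref{eq: uniformity}). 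Hence the difference quotient converges in $L^2(\d\mu)$, which is exactly the statement that $g\ast K_h$ lies in the $L^2(\d\mu)$-domain $\D_A$ and that its image is the limit $(Ag)\ast K_h$, $\mu$-almost surely.

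The two interchanges are the routine part; the step deserving care — and the place where the hypothesis does the real work — is the final passage to the limit. Mere pointwise (in $y$) convergence of the difference quotients $t^{-1}\bigl(P_t g(\cdot-y)-g(\cdot-y)\bigr)\to Ag(\cdot-y)$ would not license exchanging limit and integral; it is precisely the uniformity over $z\in B_0(h_0)$ in (\ref{eq: uniformity}), together with the fixed total mass $\|K\|_{\lambda,1}$ of the kernel, that yields $L^2(\d\mu)$-convergence of the integrated difference quotient. I would also stress that the candidate limit is built from the translate $(Ag)(\cdot-y)$: that this translate is indeed the $L^2(\d\mu)$-limit of the difference quotient of $g(\cdot-y)$ is exactly what (\ref{eq: uniformity}) asserts, so no separate translation-invariance of $A$ need be invoked.
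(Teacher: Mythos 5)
Your proof is correct, but it is not the paper's argument, and the difference is more than cosmetic. The paper computes probabilistically: writing $P_tf(X_0)=\E\big(f(X_t)\mid X_0\big)$ under $\P_\mu$, it first applies Jensen's inequality for conditional expectations to discard the conditioning, bounding the squared $L^2(\d\mu)$-norm in question by
$$\E_{\mu}\Big(\frac{1}{t}\big[g\ast K_h(X_t)-g\ast K_h(X_0)\big]-(Ag)\ast K_h(X_0)\Big)^2,$$
and only afterwards expands the convolution and applies Cauchy--Schwarz in $z$, ending with the bound $\|K\|_{\lambda,1}^2\sup_{z}\E_{\mu}\big(t^{-1}[g(X_t-z)-g(X_0-z)]-(Ag)(X_0-z)\big)^2$. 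That final quantity involves the raw path increment $g(X_t-z)-g(X_0-z)$ rather than the semigroup increment $P_tg(X_0-z)-g(X_0-z)$ appearing in (\ref{eq: uniformity}); it is therefore not what the hypothesis controls, and in fact it blows up as $t\searrow 0$, since by Dynkin's formula the martingale part of the increment contributes a term of order $t^{-1}\E_\mu\Gamma\big(g(\cdot-z)\big)$. Your ordering of the two interchanges avoids exactly this loss: you commute $P_t$ with the integral over the kernel \emph{first} (legitimate because $P_t$ is a bounded operator on $L^2(\d\mu)$; in probabilistic language this is Fubini for conditional expectations rather than Jensen), and only then pass to norms via Minkowski's integral inequality, so the term left under the supremum is literally the quantity that (\ref{eq: uniformity}) makes small. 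With that replacement the paper's computation becomes your proof in probabilistic notation, so your route is the one that actually closes the argument. The points you set aside as routine are indeed routine in the paper's setting: strong measurability of $y\mapsto g(\cdot-y)$ follows from separability of $L^2(\d\mu)$ and Pettis' theorem, and integrability of $y\mapsto |K_h(y)|\,\|g(\cdot-y)\|_{\mu,2}$ over $B_0(h)$ holds because the relevant $g$ are compactly supported in the interior of $E$ and $\pi$ is bounded and bounded away from zero on compacts by assumption (V); your reading of $Ag(\cdot-z)$ as the translate $(Ag)(\cdot-z)$ also matches the paper's own usage in its proof.
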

\begin{proof} 
We will show that under the conditions of the lemma, $(Ag)\ast K_h$ is the $L^2(\d\mu)$-limit of 
$t^{-1}(P_t(g\ast K_h)-g\ast K_h)$ as $t\searrow 0$. 
Using the Jensen inequality and Fubini's Theorem, we have
\begin{align*}
\Big\Arrowvert&\frac{P_t(g\ast K_h)-g\ast K_h}{t}-(Ag)\ast K_h\Big\Arrowvert_{\mu,2}^2\\
&=\ \E_{\mu}\Big(\frac{1}{t}\big[\E\big(g\ast K_h(X_t)\big\arrowvert X_0\big)-g\ast K_h(X_0)\big]-(Ag)\ast K_h(X_0)\Big)^2\\
&=\ \E_{\mu}\bigg(\E\Big(\frac{1}{t}\big[g\ast K_h(X_t)-g\ast K_h(X_0)\big]-(Ag)\ast K_h(X_0)\Big\arrowvert X_0\Big)^2\bigg)\\
&\leq\ \E_{\mu}\Big(\frac{1}{t}\big[g\ast K_h(X_t)-g\ast K_h(X_0)\big]-(Ag)\ast K_h(X_0)\Big)^2\\
&=\ \E_{\mu}\bigg(\int\Big\{\frac{1}{t}\big[g(X_t-z)-g(X_0-z)\big ]-(Ag)(X_0-z)\Big\}K_h(z)\d z\bigg)^2\\
&\leq\  \Arrowvert K\Arrowvert_{\lambda,1}\E_{\mu}\int\Big\{\frac{1}{t}\big[g(X_t-z)-g(X_0-z)\big ]-(Ag)(X_0-z)\Big\}^2\arrowvert K_h\arrowvert(z)\d z\\
&\leq\ \Arrowvert K\Arrowvert_{\lambda,1}^2\sup_{z\in B_0(h_0)}\E_{\mu}\Big(\frac{1}{t}\big(g(X_t-z)-g(X_0-z)\big)-(Ag)(X_0-z)\Big)^2.
\end{align*}
\end{proof}

If no additional information about $\GG$ is available, the applicability of Lemma \ref{lemma: uniformity} is very limited. 
Of course, the identity $A(g\ast K_h)=(Ag)\ast K_h$ holds always true if the coefficients of $A$ are constant in an open set which contains the support of $g$. 
Let $f:E\rightarrow\R$ be a continuous function. 
Then its modulus of continuity on any convex set $K\subset E$ is denoted by
$$
\delta(f,\Delta;K)\ :=\ \displaystyle\sup_{{x,y\in K: \Arrowvert x-y\Arrowvert_2\leq \Delta}}\arrowvert f(x)-f(y)\arrowvert,\ \ 0<\Delta<1.
$$
The next Theorem links the intermediate process $\H_{t,h}$ to the smoothed empirical diffusion process $\S_{t,h}$ via regularity conditions on the local characteristics of the describing stochastic differential equation.

\begin{theorem}\label{thm: smoothed process}
Grant the requirements of Theorem \ref{thm: smoothed functions} on $((X_t), \P_{\mu})$ and $A_{|N_A^\perp \cap \D_A}^{-1}\FF$. 
Assume in addition that the function $G$ of Theorem  \ref{thm: smoothed functions} is also envelope of the spaces of first and second order weak partial derivatives of elements of $A_{|N_A^\perp \cap \D_A}^{-1}\FF$. 
Let $h_t=h_t^{(d)}\searrow 0$ such that  $\tilde{h}_t^{(d)}=O(h_t^{(d)})$. Assume that the characteristics of the stochastic differential equation satisfy the subsequent conditions:

\begin{itemize}
\item[(i)]$\sqrt{t}\max_{i,j}\delta(a_{ij},h_t;C_{\varepsilon}) =  o(1)$,
\item[(ii)]$\sqrt{t}\max_i\delta(b_i,h_t;C_{\varepsilon}) = o(1)$ and 
\item[(iii)] $\sqrt{t} \delta(\pi,h_t;C_{\varepsilon}) = o(1)$ for some $\varepsilon$-neighborhood $C_{\varepsilon}$ of $C$.
\end{itemize}
Then, if $\FF$ is pregaussian,  
$$
\left(\S_{t,h_t}(f)\right)_{f\in\FF}\ \rightsquigarrow\ \left(\G(f)\right)_{f\in\FF}\ \ \ \text{in }\ell^{\infty}(\FF).
$$
\end{theorem}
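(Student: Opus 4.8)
The plan is to reduce the statement to Theorem~\ref{thm: smoothed functions} by showing that $\S_{t,h_t}$ and the intermediate process differ only by an asymptotically negligible remainder. The starting point is the exact identity
$$\S_{t,h_t}(f)=\sqrt t\int_E f(x)\Big(\frac1t\int_0^t K_{h_t}(x-X_u)\,\d u\Big)\d\lambda(x)=\frac1{\sqrt t}\int_0^t(f\ast K_{h_t})(X_u)\,\d u=\G_t(f\ast K_{h_t}),$$
which follows from Fubini's theorem and the radial symmetry of $K$. Writing $g_f:=A_{|N_A^\perp\cap\D_A}^{-1}f$ and $\GG:=A_{|N_A^\perp\cap\D_A}^{-1}\FF$, and recalling $f=Ag_f$, the difference to $\H_{t,h_t}(g_f)=\G_t\big(A(g_f\ast K_{h_t})\big)$ is governed by the commutator of the generator and the convolution,
$$R_t(f):=\S_{t,h_t}(f)-\H_{t,h_t}(g_f)=\G_t\big((Ag_f)\ast K_{h_t}-A(g_f\ast K_{h_t})\big).$$
Since Theorem~\ref{thm: smoothed functions} already yields $(\H_{t,h_t}(g_f))_{f\in\FF}\rightsquigarrow(\G(f))_{f\in\FF}$ in $\ell^\infty(\FF)$ under the present hypotheses on $h_t$, it suffices, by the asymptotic equivalence lemma for weak convergence in $\ell^\infty(\FF)$, to prove $\E_\mu^*\sup_{f\in\FF}|R_t(f)|\to0$.

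Next I would make the commutator explicit. As in Step~I of the proof of Theorem~\ref{thm: smoothed functions}, for $g\in\GG\subset\WW^{2,2}(\d\mu)$ of compact support and $h_t$ small we have $g\ast K_{h_t}\in\CC_K^2\subset\D_A$ (Proposition~\ref{prop: well-defined}), so $A$ acts as the second-order differential operator (\ref{eq: A}) and, by Lemma~\ref{lemma: wd}, the weak derivatives pass through the convolution. Hence
$$(Ag)\ast K_{h_t}(x)-A(g\ast K_{h_t})(x)=\int\Big(\tfrac12\sum_{i,j}\big(a_{ij}(y)-a_{ij}(x)\big)\partial_i\partial_j g(y)+\sum_i\big(b_i(y)-b_i(x)\big)\partial_i g(y)\Big)K_{h_t}(x-y)\,\d y,$$
with all derivatives understood in the weak sense. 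As $K_{h_t}$ is supported in $B_0(h_t)$, each coefficient difference is bounded by its modulus of continuity on $C_\varepsilon$; combined with the assumed envelope $G$ for the first and second weak partial derivatives of elements of $\GG$, this gives the pointwise bound, uniform in $f\in\FF$,
$$\big|(Ag_f)\ast K_{h_t}(x)-A(g_f\ast K_{h_t})(x)\big|\ \lesssim\ \Big(\max_{i,j}\delta(a_{ij},h_t;C_\varepsilon)+\max_i\delta(b_i,h_t;C_\varepsilon)\Big)\,(G\ast|K_{h_t}|)(x).$$

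I would then split $R_t(f)$ into its deterministic mean and a centered fluctuation. Using $\int A(g_f\ast K_{h_t})\,\d\mu=0$, $\int f\,\d\mu=0$ and the evenness of $K$,
$$\E_\mu R_t(f)=\sqrt t\int_E(f\ast K_{h_t})\,\d\mu=\sqrt t\int_E f\,(\pi\ast K_{h_t}-\pi)\,\d\lambda,$$
which is bounded uniformly over $\FF$ by $\sqrt t\,\delta(\pi,h_t;C_\varepsilon)\,\|K\|_{\lambda,1}\sup_{f\in\FF}\|f\|_{\lambda,1}\lesssim\sqrt t\,\delta(\pi,h_t;C_\varepsilon)$ and vanishes by condition~(iii); here $\sup_{f}\|f\|_{\lambda,1}<\infty$ because $|f|=|Ag_f|\lesssim G$ on $C$ by the boundedness of $a,b$ and the envelope. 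For the centered part I would use the elementary inequality $\E_\mu^*\sup_f|\G_t(\psi_f)|\le\sqrt t\,\E_\mu\sup_f|\psi_f(X_0)|$ (Jensen and stationarity) together with the pointwise commutator bound, to obtain
$$\E_\mu^*\sup_{f\in\FF}\big|R_t(f)-\E_\mu R_t(f)\big|\ \lesssim\ \sqrt t\Big(\max_{i,j}\delta(a_{ij},h_t;C_\varepsilon)+\max_i\delta(b_i,h_t;C_\varepsilon)\Big)\int_E(G\ast|K_{h_t}|)\,\d\mu,$$
where $\int_E(G\ast|K_{h_t}|)\,\d\mu\lesssim\int(G\ast|K_{h_t}|)\,\d\lambda=\|G\|_{\lambda,1}\|K\|_{\lambda,1}<\infty$ by assumption~(V) and Fubini, exactly as in (\ref{eq: equiv1})--(\ref{eq: Fubini}). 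This term vanishes by conditions~(i) and~(ii). Combining the two estimates gives $\E_\mu^*\sup_{f\in\FF}|R_t(f)|\to0$, and the asymptotic equivalence lemma then delivers $(\S_{t,h_t}(f))_{f\in\FF}\rightsquigarrow(\G(f))_{f\in\FF}$.

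The main obstacle I anticipate is that all discrepancy terms are amplified by the factor $\sqrt t$: the reduction only works because the bias $\sqrt t\,\delta(\pi,h_t;C_\varepsilon)$ and the fluctuation $\sqrt t\,(\max_{i,j}\delta(a_{ij},h_t;C_\varepsilon)+\max_i\delta(b_i,h_t;C_\varepsilon))$ are driven to zero, which is precisely the content of the undersmoothing conditions~(i)--(iii) and explains why sharp moduli-of-continuity rates of the local characteristics are imposed. The secondary technical points are the legitimacy of the differential-operator action on $g_f\ast K_{h_t}$ and of passing weak derivatives through the convolution (controlled by the It\^o--Feller structure, Proposition~\ref{prop: well-defined} and Lemma~\ref{lemma: wd}), together with the restriction to function classes with compact support in the interior of $E$, which avoids boundary contributions.
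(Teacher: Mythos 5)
Your proposal is correct and takes essentially the same route as the paper's own proof: reduction to Theorem \ref{thm: smoothed functions} via the identity $\S_{t,h_t}(f)=\G_t(f\ast K_{h_t})$, the same commutator decomposition of $A(g\ast K_{h_t})-(Ag)\ast K_{h_t}$ into $a$- and $b$-terms bounded by their moduli of continuity times the envelope term $G\ast\arrowvert K_{h_t}\arrowvert$, and the same bias estimate $\sqrt t\,\arrowvert\E_\mu(f\ast K_{h_t})\arrowvert\lesssim\sqrt t\,\delta(\pi,h_t;C_{\varepsilon})\,\sup_{f\in\FF}\Arrowvert f\Arrowvert_{\lambda,1}$. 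The only difference is organizational (you split explicitly into mean and centered fluctuation, while the paper presents the two estimates side by side), not mathematical.
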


\begin{proof} 
Recall the definitions (\ref{def:H}) and (\ref{def:S}), and let $\GG := A_{|N_A^\perp \cap \D_A}^{-1}\FF$.
In view of Theorem \ref{thm: smoothed functions} it is sufficient to prove
\begin{equation}\label{eq: approx_1}
\big(\H_{t,h_t}(g)\big)_{g\in\GG}\ =\ \big(\S_{t,h_t}(Ag)\big)_{g\in\GG}\ +\  o_{\P}(1).
\end{equation}
First, 
\begin{align*}
\sqrt{t}\big\arrowvert \E_{\mu}(f\ast K_{h_t})\big\arrowvert \ &=\ \sqrt{t}\big\arrowvert \E_{\mu}(f\ast K_{h_t}-f)\big\arrowvert	\\
&=\ \sqrt{t}\Big\arrowvert \iint_Ef(y)K_{h_t}(x-y)\d y \pi(x)\d x\ -\int_E f(x)\pi(x)\d x\Big\arrowvert\\
&=\ \sqrt{t}\Big\arrowvert \int_Ef(y)\Big(\int_E(\pi(x)-\pi(y))K_{h_t}(x-y)\d x\Big)\d y\Big\arrowvert\\
&\leq \ \Arrowvert K\Arrowvert_{\lambda,1}\Arrowvert f\Arrowvert_{\lambda,1}\sqrt{t}\delta(\pi,h_t;C_{\varepsilon}).					
\end{align*}
The uniform boundedness $
\sup_{f\in\FF} \Arrowvert f\Arrowvert_{L^1}<\infty$ follows from the continuity of $a$ and $b$ and therefore, their uniform boundedness on compacts, the 
existing envelope $G$ on the weak partial derivatives in the expression of $f=Ag$ as well as the equivalence of $\lambda\sim \mu$ on the support of $G$. 
Furthermore, using the abbreviation 
$$
D_w^2g = \left(\frac{\partial_w^2}{\partial x_i\partial x_j}g(\cdot)\right)_{i,j=1}^d,
$$
we obtain
\begin{align*}
\E_{\mu}^*\bigg(\sup_{g\in\GG}\,&\frac{1}{\sqrt{t}}\bigg\arrowvert\int_0^tA(g\ast K_{h_t})(X_u)-(Ag)\ast K_{h_t}(X_u)\d u\bigg\arrowvert\bigg)\\
&\leq\ \sqrt{t}\,\E_{\mu}^*\bigg(\sup_{g\in\GG}\left|A(g\ast K_{h_t})-(Ag)\ast K_{h_t}\right|\bigg)\\
&\leq\ \sqrt{t}\,\int_E\sup_{g\in\GG}\bigg\arrowvert \int \operatorname{tr}\big((a(y)-a(x))\tt D_w^2g(y)\big)K_h(x-y)\d y\bigg\arrowvert \d\mu^*(x) \\ 
&\ \ \ \ \ \ \ \ \ +\  \sqrt{t}\,\int_E\sup_{g\in\GG}\bigg\arrowvert \int \big( b(y)-b(x)\big)^t \nabla_wg(y)K_h(x-y)\d y\bigg\arrowvert \d\mu^*(x)\\
&\lesssim \ \sqrt{t}\big(\max_{i,j}\delta(a_{ij},h_t;C_{\varepsilon})+\max_i\delta(b_i,h_t;C_{\varepsilon})\big)\E_{\mu}\big(G\ast \arrowvert K_{h_t}\arrowvert(X_0)\big)\\
&\lesssim \ \sqrt{t}\big(\max_{i,j}\delta(a_{ij},h_t;C_{\varepsilon})+\max_i\delta(b_i,h_t;C_{\varepsilon})\big)\E_{\mu}G(X_0),
\end{align*}
where the last $\lesssim$ follows by the same reasoning as in (\ref{eq: equiv1}) -- (\ref{eq: equiv2}). These findings entail (\ref{eq: approx_1}).
\end{proof}

\section{Example} We concretize our results for the following specific setting. Suppose that the diffusion is described by the stochastic differential equation
\begin{equation}\label{eq: beispiel}
\d X_t\ =\ b(X_t)\d t\ +\ \sigma \d W_t,\ \ 0\leq t\leq T,
\end{equation}
with drift $b:\R^d\rightarrow\R^d$ and $d$-dimensional Wiener process $W$. 
It is assumed subsequently that there exists a potential $V\in\CC^{1}(\R^d)$ such that $b=-\nabla V$. 
If the drift term $b$ satisfies the at most linear growth condition $\Arrowvert b(x)\Arrowvert_2 \lesssim (1+\Arrowvert x\Arrowvert_2)$, 
then equation (\ref{eq: beispiel}) admits a unique weak solution (\cite{karshr88}, Proposition 3.6). 
If $\exp\circ (-2V)\in L^1(\R^d,\d\lambda)$, there exists a  unique invariant measure (\cite{bhat78}, Theorem 3.5) 
which is Lebesgue continuous with invariant density
$$
\pi(x) =\ \Big(\int_{\R^d}\exp(-2V(u))\d u\Big)^{-1}\exp(-2V(x)),\ \ x\in\R^d,
$$
(\cite{lorber07}, Theorem 8.1.26). 
Suppose further that the process starts in the equilibrium, i.e. $X_0\sim\mu$. 

For any convex set $I\subset \R^d$, let $\HH_d(\beta,L;I)$ denote the isotropic H\"older smoothness class, which for $\beta\leq 1$ equals
$$
\HH_d(\beta,L; I)\ :=\ \Big\{\phi:I\rightarrow\R:\big\arrowvert\phi(x)-\phi(y)\big\arrowvert\leq L\Arrowvert x-y\Arrowvert_2^{\beta}\Big\}.
$$ 
Let $\lfloor \beta\rfloor$ denote the largest integer strictly smaller than $\beta$. 
For $\beta>1$, $\HH_d(\beta,L;I)$ consists of all functions $f:I\rightarrow \R$ that are $\lfloor\beta\rfloor$ times continuously differentiable 
such that the following property is satisfied: 
if $P_y^{(f)}$ denotes the Taylor polynomial of $f$ at the point $y\in I$ up to the $\lfloor\beta\rfloor$-th order, 
$$
\Big\arrowvert f(x)-P_y^{(f)}(x)\Big\arrowvert\ \leq\ L\Arrowvert x-y\Arrowvert_2^{\beta}\ \ \text{for all}\ x,y\in I.
$$

\begin{corollary}\label{das Korollar}
Let $V\in\HH_2(\beta+1,L;\R^2)$ for some $L>0$, $\beta>0$. 
Suppose $b=-\nabla V$ satisfies the at most linear growth condition, $\exp\circ (-2V)\in L^1(\R^2,\d\lambda)$ and
\begin{equation}\label{eq: bedingung}
\max_{i=1,...,d}\max_{\alpha:\arrowvert \alpha\arrowvert\leq \lfloor \beta\rfloor}\big\arrowvert\partial^{\alpha}b_i(0)\big\arrowvert\ \leq\ \gamma
\end{equation}
for some $\gamma>0$. Let $A_{|N_A^\perp \cap \D_A}^{-1}\FF$ satisfy the requirement of Theorem \ref{thm: smoothed process}, where $C\subset E$ is convex. Let one of the following conditions be satisfied:
\begin{itemize}
\item[(i)] $(X_t) = (\Phi(M_t))$ for some isotropic local martingale $(M_t)$ in $\R^2$ satisfying the conditions of Theorem \ref{thm: d=2}, and assume that there exist constants $\delta,\Delta>0$ such that
\begin{equation}
B_x(\delta r)\ \subset\ \Phi^{-1}(B_x(r))\ \subset B_x(\Delta r)\ \ \forall\,x\in C,\ 0<r\leq 1.\label{eq: condIII}
\end{equation}
Suppose that
$
{h}_t :=
					t^{-\eta}
$
for some $\eta>\max\big(1/(2\beta), 1/2\big)$ satisfies condition (\ref{busch}). 
\item[(ii)] Let $h_t\sim t^{-1/2}\log (\e t)$ and $\beta>1$.
\end{itemize}
Then $$
\left(\S_{t,h_t}(f)\right)_{f\in\FF}\ \rightsquigarrow\ \left(\G(f)\right)_{f\in\FF}\ \ \ \text{in }\ell^{\infty}(\FF).$$
\end{corollary}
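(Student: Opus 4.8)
The plan is to reduce the statement, in both regimes, to the weak convergence of the intermediate process $(\H_{t,h_t})$ established in Theorems \ref{thm: smoothed functions} and \ref{thm: d=2}, and then to the approximation (\ref{eq: approx_1}) supplied by Theorem \ref{thm: smoothed process}. As a preliminary I would check that the example falls under (I)--(V). Since $a=\sigma\sigma\tt$ is constant and positive definite, uniform ellipticity (III) is immediate and, moreover, $\delta(a_{ij},h_t;C_\ep)\equiv0$, so condition (i) of Theorem \ref{thm: smoothed process} is void; the at most linear growth of $b=-\nabla V$ yields the transition density bound (II) by the remark following assumption (II); the gradient structure of the drift with constant diffusion makes the generator $\tfrac12\Delta-\nabla V\cdot\nabla$ self-adjoint on $L^2(\d\mu)$, which is (IV), with explicit invariant density $\pi\propto\exp(-2V)$ that is continuous, positive and locally bounded, giving (V). The spectral gap (I) is the only condition needing a genuine argument and would follow from the confinement of $V$, whose global behaviour is pinned down by $\exp(-2V)\in L^1$ and the derivative bound (\ref{eq: bedingung}). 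Finally, since $V\in\HH_2(\beta+1,L;\R^2)$, each $b_i=-\partial_iV$ lies in $\HH_2(\beta,\cdot;C_\ep)$ and $\pi$ inherits the regularity of $V$ on compacts, so $\delta(b_i,h_t;C_\ep)\lesssim h_t^{\min(\beta,1)}$ and $\delta(\pi,h_t;C_\ep)\lesssim h_t$.

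For case (i) I would first treat the intermediate process by Theorem \ref{thm: d=2}: the representation $X=\Phi(M)$ with $M$ an isotropic local martingale, together with the two-sided inclusion (\ref{eq: condIII}), transfers the control of (\ref{eq: central}) for $X$ to the analogous occupation quantity for $M$, which is exactly what the assumed condition (\ref{busch}) delivers for $h_t=t^{-\eta}$; hence $(\H_{t,h_t})\rightsquigarrow(\G(f))$. Passing to $\S_{t,h_t}$ through Theorem \ref{thm: smoothed process} then only requires its conditions (ii) and (iii), which here read $\sqrt t\,h_t^{\min(\beta,1)}=t^{1/2-\eta\min(\beta,1)}\to0$ and $\sqrt t\,h_t=t^{1/2-\eta}\to0$. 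Both hold precisely when $\eta>\max(1/(2\beta),1/2)$, which is the range imposed in the statement.

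Case (ii) is the delicate one, and I expect it to be the technical heart of the argument. The bandwidth $h_t\sim t^{-1/2}\log(\e t)=\tilde{h}_t^{(2)}$ is the smallest admissible in Theorem \ref{thm: smoothed functions}, so $(\H_{t,h_t})\rightsquigarrow(\G(f))$ is immediate; but at this critical scale the first-order modulus estimate underlying Theorem \ref{thm: smoothed process} is too weak, since $\sqrt t\,\delta(b_i,h_t;C_\ep)\sim\log(\e t)\not\to0$. To recover (\ref{eq: approx_1}) I would reopen the bias estimates in the proof of Theorem \ref{thm: smoothed process} and exploit the radial symmetry of $K$, for which $\int zK_{h}(z)\,\d\lambda(z)=0$. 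With $\beta>1$, so that $b\in\CC^1$ and $\pi\in\CC^2$ on $C_\ep$, a second-order Taylor expansion annihilates the first-order term and reduces both the drift contribution $\int(b(y)-b(x))\tt\nabla_wg(y)K_{h_t}(x-y)\,\d\lambda(y)$ and the density bias to order $h_t^{\min(\beta,2)}$, uniformly over $g\in\GG$ thanks to the envelope $G$ for the second-order weak derivatives assumed in Theorem \ref{thm: smoothed process}. Since $\min(\beta,2)>1$, this gives $\sqrt t\,h_t^{\min(\beta,2)}=t^{(1-\min(\beta,2))/2}(\log(\e t))^{\min(\beta,2)}\to0$ and hence (\ref{eq: approx_1}). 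It is exactly this gain of one order from the kernel symmetry — rather than the verification of (I)--(V) or the estimates of case (i) — that makes the strict inequality $\beta>1$ both necessary for the Taylor step and sufficient for the conclusion.
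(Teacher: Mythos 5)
Your proposal is correct and, in case (i), coincides with the paper's own argument: conditions (i)--(iii) of Theorem \ref{thm: smoothed process} are checked through first-order moduli of continuity ($\delta(a_{ij},h_t;C_\ep)\equiv 0$ for constant diffusion coefficient, $\delta(b_i,h_t;C_\ep)\lesssim h_t^{\min(\beta,1)}$, and $\delta(\pi,h_t;C_\ep)\lesssim h_t$ since $\pi$ is H\"older of order $\beta+1>1$), these are $o(t^{-1/2})$ exactly when $\eta>\max\big(1/(2\beta),1/2\big)$, and the missing ingredient --- the bound on (\ref{eq: central}), which Theorem \ref{thm: smoothed functions} cannot supply because $h_t=t^{-\eta}$ undershoots $\tilde h_t^{(2)}$ --- is imported from Theorem \ref{thm: d=2} via the inclusion (\ref{eq: condIII}) and condition (\ref{busch}), precisely as in the paper. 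Where you genuinely depart from (and improve on) the paper is case (ii), which the paper dismisses with the sentence ``Similar considerations reveal claim (ii).'' Taken literally, the case-(i) considerations fail there: at $h_t\sim t^{-1/2}\log(\e t)$ one has $\sqrt t\,h_t=\log(\e t)\rightarrow\infty$, so no first-order modulus bound can verify conditions (ii)--(iii) of Theorem \ref{thm: smoothed process}, however large $\beta$ is. You correctly spotted this and repaired the bias estimates: the radial symmetry of $K$ gives a vanishing first moment, a second-order Taylor expansion of $b$ and $\pi$ kills the first-order terms, and the cross term involving $\nabla_wg(y)-\nabla_wg(x)$ is $O(h_t^2)$ uniformly in $g\in\GG$ thanks to the envelope on the second-order weak derivatives, yielding a bias of order $h_t^{\min(\beta,2)}$ and hence $o(t^{-1/2})$ precisely for $\beta>1$. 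This is in substance the mechanism of the paper's final theorem (joint Taylor expansion of $b_i$ and $(\nabla g)_i$ against a kernel of order $2\lfloor\beta\rfloor-1$, $\beta>d/2$, specialized to $d=2$), but your variant replaces the H\"older smoothness of $\nabla g$ assumed there by the envelope condition already present in Theorem \ref{thm: smoothed process}, so it stays within the corollary's stated hypotheses --- arguably a cleaner justification of claim (ii) than the paper itself offers.

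One caveat: your preliminary paragraph claiming that (I)--(V) can be verified is unsound exactly where a genuine argument would be required. A Poincar\'e inequality does not follow from $\exp\circ(-2V)\in L^1$ together with the derivative bound (\ref{eq: bedingung}); for instance $V(x)\sim\|x\|^{1/2}$ at infinity gives an integrable invariant density but no spectral gap, and (\ref{eq: bedingung}) constrains derivatives only at the origin. Fortunately this step is not needed: the corollary grants (I)--(V) through its reference to the conditions of Theorem \ref{thm: d=2} in case (i) and to the requirements of Theorem \ref{thm: smoothed process} (hence of Theorem \ref{thm: smoothed functions}) in general, and the paper's own proof never re-derives them. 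These conditions should be presented as hypotheses of the corollary, not as consequences of the assumptions on $V$.
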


\begin{proof}
In case of (i), first note that $\sqrt{t}h_t^{\omega}=o(1)$ for arbitrary $\omega\in [\beta,1]$. Now,
\begin{align*}
\sqrt{t}\,\delta\left(b_i,h_t;C_\ep\right)\ &\lesssim\ \begin{cases}
L\sqrt{t}h_t^{\beta} & \text{for $\beta\in (0,1]$,}\\
\sup_{\alpha:\arrowvert\alpha\arrowvert = 1}\sup_{x\in C}\arrowvert \partial^{\alpha}b_i(x)\arrowvert\sqrt{t}h_t
 & \text{for $\beta>1$}                                  \end{cases}\\
&=\ o(1).
\end{align*} 
Furthermore, $V\in\HH_2(\beta+1,L;\R^2)$ and (\ref{eq: bedingung}) 
together imply that $\pi$ is H\"older continuous of order $\beta+1$ on every bounded set $K\subset\R^2$. 
Thus, since $\partial^{\alpha}\pi$ is continuous and hence, uniformly bounded on $C_\ep$ for all $\alpha\in\{0,1\}^2$ with $\arrowvert\alpha\arrowvert = 1$,
\begin{align*}\sqrt{t}\, \delta(\pi,h_t;C_\ep)\ = \ 
		\sqrt{t}\sup_{\substack{x,y\in C_\ep:\\ \Arrowvert x-y\Arrowvert_2\leq h_t}}\big\arrowvert \pi(x)-\pi(y)\big\arrowvert 
		\leq \sqrt{t}\, 2\max_{\alpha:\arrowvert\alpha\arrowvert=1}\sup_{x\in C_\ep}\arrowvert \partial^{\alpha}\pi(x)\arrowvert h_t = o(1).
\end{align*}
The result follows now from Theorem \ref{thm: smoothed process}, using (\ref{eq: condIII}) and noting that
$$
(\ref{eq: central})\ \lesssim\ \sup_{y \in C}\left\arrowvert t^{-1}\int_0^t \mathds{1}_{\Phi^{-1}(B_y(h_t))}(M_u)\d u\right\arrowvert,
$$
which allows the same estimate for this expression as used in Theorem \ref{thm: d=2}. Similar considerations reveal claim (ii).
\end{proof}

\paragraph{\sc Remark} The case of dimension $d=2$ takes an exposited place. 
Here, the possibility of smoothing via even exponentially small bandwidth, though in a very special case, can be possible. 
It is worth being noticed that the H\"older regularity conditions on the local characteristics of the describing stochastic differential equation in 
Corollary \ref{das Korollar} (i) are  much 
weaker than what is typically imposed for ensuring existence and uniqueness of a strong solution.
Note that no specific isotropy structure of the diffusion is required in (ii) to allow $h_t \sim t^{-1/2}\log(\e t)$ which makes the result of Corollary \ref{das Korollar}
applicable in general whenever $\beta > 1$.

Of course, imposing tighter conditions on the modulus of continuity of the local characteristics also enables to establish the approximation (\ref{eq: approx_1}) for the 
$d$-dimensional case with $d\geq 3$, but these are not satisfied for $a$, $b$ and $\pi$ with coordinates belonging to some smoothness class of the H\"older type. While the result in Theorem \ref{thm: smoothed process} does not involve any further regularity constraint on the function class $\FF$, the subsequent Theorem benefits of combined smoothness of the local characteristics and the function class $\FF$ at once. Recall that some function $K:\R^d\rightarrow \R$ is called kernel of order $l$ for some integer $l\geq 0$, if the functions $u\mapsto u_i^jK(u)$, $j=0,1,...,l$ and $i=1,...,d$, are integrable and satisfy
$$
\int K(u)d\lambda(u)\ =\ 1,\ \ \int u_i^jK(u)d\lambda(u)\ =\ 0,\ \ j=1,...,l, \ i=1,...,d.
$$
Here, $u_i$ denotes the $i'$th coordinate of $u\in\R^d$.

\begin{theorem}
Let $V\in\HH_d(\beta+1,L;\R^d)$ for some $L>0$, $\beta>d/2$. 
Suppose $b=-\nabla V$ satisfies the at most linear growth condition, $\exp\circ (-2V)\in L^1(\R^d,\d\lambda)$ and
\begin{equation}\label{eq: bedingung2}
\max_{i=1,...,d}\max_{\alpha:\arrowvert \alpha\arrowvert\leq \lfloor \beta\rfloor}\big\arrowvert\partial^{\alpha}b_i(0)\big\arrowvert\ \leq\ \gamma
\end{equation}
for some $\gamma>0$. 
Assume that  $\GG=A_{|N_A^\perp \cap \D_A}^{-1}\FF$ satisfies the requirement of Theorem \ref{thm: smoothed process}, where $C\subset E$ is convex, $\GG\subset \CC^1(\R^d)$ and 
$$
\big\{\partial^{\alpha} g:\ g\in \GG, \arrowvert \alpha\arrowvert =1, \alpha\in\{0,1\}^d\big\}\subset \HH_d(\beta-1,L; C)
$$
for $\beta>d/2$.
Let 
$
{h}_t^{(d)} :=
					t^{-1/d}\log (et).
$
Then $$
\left(\S_{t,h_t}(f)\right)_{f\in\FF}\ \rightsquigarrow\ \left(\G(f)\right)_{f\in\FF}\ \ \ \text{in }\ell^{\infty}(\FF),
$$
provided that the involved kernel $K$ is of order $2\lfloor\beta\rfloor-1$.

\end{theorem}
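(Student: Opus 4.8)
The plan is to derive the statement from Theorem \ref{thm: smoothed functions} via the approximation scheme of Theorem \ref{thm: smoothed process}, the essential difference being that the first-order modulus-of-continuity bounds (i)--(iii) of Theorem \ref{thm: smoothed process} are no longer of order $o(t^{-1/2})$ in the regime $\beta>d/2$, $d\geq 3$, and must be upgraded to a higher-order expansion exploiting the \emph{joint} smoothness of the drift and of the indexing class $\GG$. Since the chosen bandwidth $h_t=t^{-1/d}\log(\e t)$ equals $\tilde h_t^{(d)}$, it is admissible in Theorem \ref{thm: smoothed functions}, so that $(\H_{t,h_t}(g))_{g\in\GG}\rightsquigarrow(\G(f))_{f\in\FF}$ with $\GG=A_{|N_A^\perp\cap\D_A}^{-1}\FF$. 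It therefore suffices to re-establish the approximation (\ref{eq: approx_1}), i.e.
$$\sqrt t\,\E_\mu^*\sup_{g\in\GG}\big|A(g\ast K_{h_t})-(Ag)\ast K_{h_t}\big|\ +\ \sqrt t\sup_{f\in\FF}\big|\E_\mu(f\ast K_{h_t})\big|\ =\ o(1),$$
since once (\ref{eq: approx_1}) holds the conclusion $(\S_{t,h_t}(f))_{f\in\FF}\rightsquigarrow(\G(f))_{f\in\FF}$ follows exactly as in Theorem \ref{thm: smoothed process}. Because the diffusion (\ref{eq: beispiel}) has constant diffusion matrix $a=\sigma\sigma\tt$, the second-order part of $A(g\ast K_{h_t})-(Ag)\ast K_{h_t}$ vanishes identically; after the substitution $y=x-h_tz$ the first summand reduces to the drift contribution
$$\sum_{i=1}^d\int\big(b_i(x)-b_i(x-h_tz)\big)\,\partial_i g(x-h_tz)\,K(z)\,\d\lambda(z),$$
while the second summand is governed, as in Theorem \ref{thm: smoothed process}, by the bias integral $\int(\pi(x)-\pi(x+h_tz))K(z)\,\d\lambda(z)$.

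The crucial step is a combined Taylor expansion of the drift contribution. I would expand $b_i(x)-b_i(x-h_tz)$ around $z=0$ up to order $\lfloor\beta\rfloor$, which is legitimate since $b=-\nabla V$ with $V\in\HH_d(\beta+1,L;\R^d)$ forces $b_i\in\HH_d(\beta,\cdot)$, so that its $\lfloor\beta\rfloor$-th derivative is Hölder of order $\beta-\lfloor\beta\rfloor$ and the Taylor remainder is $O(\|h_tz\|^{\beta})$; crucially this factor has no constant term in $z$. Simultaneously I would expand $\partial_i g(x-h_tz)$ up to order $\lfloor\beta\rfloor-1$, using the hypothesis $\{\partial^\alpha g:|\alpha|=1\}\subset\HH_d(\beta-1,L;C)$, with remainder $O(\|h_tz\|^{\beta-1})$. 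Multiplying the two expansions, each purely polynomial summand is a monomial $z^{\gamma+\delta}$ of total degree $|\gamma|+|\delta|$ ranging from $1$ (the constant term is removed since $b_i(x)-b_i(x)=0$) up to $\lfloor\beta\rfloor+(\lfloor\beta\rfloor-1)=2\lfloor\beta\rfloor-1$. Precisely because $K$ is a (product) kernel of order $2\lfloor\beta\rfloor-1$, all these monomials integrate to zero against $K$, so that only the polynomial$\times$remainder and remainder$\times$remainder terms survive. Bounding these crudely by $\int\|z\|^{|\gamma|}\,\|h_tz\|^{\beta-1}\,|K(z)|\,\d\lambda(z)\lesssim h_t^{\beta}$ (the minimal surviving order, attained for $|\gamma|=1$, $|\delta|=0$) and absorbing the $g$-dependence into the envelope $G$ and the uniform Hölder constant $L$ on $C$, one obtains the uniform bound $O(h_t^{\beta})$. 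Hence $\sqrt t\,h_t^{\beta}=t^{1/2-\beta/d}(\log\e t)^{\beta}=o(1)$ exactly under the standing hypothesis $\beta>d/2$, which is where this condition enters decisively.

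For the bias term I would Taylor-expand $\pi(x+h_tz)-\pi(x)$; since $V\in\HH_d(\beta+1,L;\R^d)$ and (\ref{eq: bedingung2}) control $\pi\propto\exp(-2V)$ and its derivatives on compacts, one has $\pi\in\HH_d(\beta+1,\cdot)$ there, and the kernel of order $2\lfloor\beta\rfloor-1$ annihilates the Taylor monomials of degree up to $\min(2\lfloor\beta\rfloor-1,\lfloor\beta\rfloor+1)$. A short case distinction then gives surviving order $\min(2\lfloor\beta\rfloor,\beta+1)\geq\beta$, so that, together with $\sup_{f\in\FF}\|f\|_{\lambda,1}<\infty$ (which follows from the envelope on the derivatives of $g$ and $\lambda\sim\mu$ on $\supp G$, exactly as in Theorem \ref{thm: smoothed process}), this contribution is $O(h_t^{\beta})$ as well, hence again $o(t^{-1/2})$. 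I expect the main obstacle to be the bookkeeping of the combined expansion: one must verify that the largest polynomial degree produced is never more than $2\lfloor\beta\rfloor-1$ so that the prescribed kernel order suffices, handle the boundary case $\lfloor\beta\rfloor=1$ (where the degree-two Taylor term of $\pi$ is not cancelled but remains of the smaller order $h_t^2\leq h_t^{\beta}$), pass from the coordinatewise vanishing-moment definition to full multivariate moments via a product kernel, and ensure that the intermediate derivatives $\partial^\delta\partial_i g$ entering the expansion are controlled uniformly over $\GG$ on $C$ by $G$ and $L$; the integer-$\beta$ case requires the usual minor adjustment of the Hölder exponent.
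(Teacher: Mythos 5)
Your proposal is correct and follows essentially the same route as the paper's proof: reduction to the approximation (\ref{eq: approx_1}) with $h_t=\tilde h_t^{(d)}$, a joint Taylor expansion of $b_i$ to order $\lfloor\beta\rfloor$ and of $(\nabla g)_i$ to order $\lfloor\beta\rfloor-1$ whose pure polynomial products (of degree between $1$ and $2\lfloor\beta\rfloor-1$, with no constant term) are annihilated by the kernel of order $2\lfloor\beta\rfloor-1$, cross terms bounded by $h_t^{\beta}$, and the bias term handled via the $\HH_d(\beta+1)$-regularity of $\pi$, all concluding from $\sqrt{t}\,h_t^{\beta}=o(1)$ for $\beta>d/2$. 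Your explicit treatment of the boundary case $\lfloor\beta\rfloor=1$ and of the coordinatewise-versus-mixed moment issue for the kernel is a welcome refinement of details the paper leaves implicit.
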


\begin{proof}
The proof follows the lines of Theorem \ref{thm: smoothed process} with $h_t=h_t^{(d)}$, involving however different estimates for the expressions
\begin{equation}
\sqrt{t}\big\arrowvert \E_{\mu}(f\ast K_{h_t})\big\arrowvert\ =\ \sqrt{t}\Big\arrowvert \int_Ef(y)\Big(\int_E(\pi(x)-\pi(y))K_{h_t}(x-y)\d x\Big)\d y\Big\arrowvert\label{eq: est 1}
\end{equation}
and
\begin{align}
 \E_{\mu}^*\bigg(\sup_{g\in\GG}\,&\frac{1}{\sqrt{t}}\bigg\arrowvert\int_0^tA(g\ast K_{h_t})(X_u)-(Ag)\ast K_{h_t}(X_u)\d u\bigg\arrowvert\bigg)\nonumber\\ 
&\leq\ \sqrt{t}\,\int_E\sup_{g\in\GG}\bigg\arrowvert \int \big( b(y)-b(x)\big)^t \nabla_wg(y)K_{h_t}(x-y)\d y\bigg\arrowvert \d\mu^*(x),\label{eq: est 2}
\end{align}
respectively. $V\in\HH_d(\beta+1,L;\R^d)$ and (\ref{eq: bedingung2}) 
together imply that $\pi$ is H\"older continuous of order $\beta+1$ on every bounded set $D\subset\R^d$. Since $K$ is of order $\geq d/2$, Taylor expansion of $\pi(y)$ around $\pi(x)$ in the inner integral of the right-hand side in (\ref{eq: est 1}) up to the $\lfloor \beta\rfloor$'th order provides the bound
$$
\sqrt{t}\big\arrowvert \E_{\mu}(f\ast K_{h_t})\big\arrowvert\ \lesssim\ \sqrt{t}h_t^{\beta}\ =\ o(1).
$$

Due to the absolute value involved in (\ref{eq: est 2}), a similar argument based on Fubini's Theorem as in (\ref{eq: est 1}) is not applicable, 
and we need to take advantage of additional smoothness of $\nabla g$ as well. 
Let $b_i$ and $(\nabla g)_i$ denote the $i$'th coordinate of $b$ and $\nabla g$, respectively. 
Let $P_y^{(b_i)}$ denote the Taylor polynomial of $b_i$ at the point $y\in \R^d$ up to the $\lfloor\beta\rfloor$-th order, 
and similarly, $Q_y^{(\nabla g)_i}$ denotes the Taylor polynomial of $(\nabla g)_i$ at the point $y\in \R^d$ up to the order $\lfloor\beta\rfloor -1$. 
Now, with the notation
$$
R_{x,y}^{(b_i)}\, :=\, b_i(y) - P_x^{b_i}(y)\ \ \ \text{and} \ \ \ \tilde{R}_{x,y}^{(\nabla g )_i}\, :=\,  (\nabla g)_i(y)-Q_x^{(\nabla g)_i}(y),
$$
we obtain
\begin{align}
\int \big( b(y)-&b(x)\big)^t (\nabla g)(y)K_h(x-y)\d y\nonumber\\
&=\ \sum_{i=1}^d\int \Big(P_x^{b_i}(y)-b_i(x) + R_{x,y}^{(b_i)}\Big)\Big(Q_x^{(\nabla g)_i}(y) + \tilde{R}_{x,y}^{(\nabla g )_i}\Big) K_h(x-y)\d y\nonumber\\
&= \ \sum_{i=1}^d\int R_{x,y}^{(b_i)}\Big(Q_x^{(\nabla g)_i}(y) + \tilde{R}_{x,y}^{(\nabla g )_i}\Big) K_h(x-y)\d y\label{eq: final I}\\
&\ \ \ \ \ +\ \sum_{i=1}^d\int \Big(P_x^{b_i}(y)-b_i(x)\Big)\tilde{R}_{x,y}^{(\nabla g )_i} K_h(x-y)\d y\label{eq: final II},
\end{align}
since $K$ is of order $2\lfloor \beta\rfloor-1$. As concerns the expression in (\ref{eq: final I}), it remains to note that by the definition of $\HH(\beta,L;C)$,
$$
\sup_{\substack{x,y\in C_{\varepsilon}:\\ \Arrowvert x-y\Arrowvert_2\leq h_t}}\big\arrowvert R_{x,y}^{(b_i)}\big\arrowvert\ \leq\ L\cdot h_t^{\beta},\ \ 
\sup_{\substack{x,y\in C_{\varepsilon}:\\ \Arrowvert x-y\Arrowvert_2\leq h_t}}\big\arrowvert \tilde{R}_{x,y}^{(\nabla g)_i}\big\arrowvert\ \leq\ L\cdot h_t^{\beta-1}.
$$
Furthermore, $Q_x^{(\nabla g)_i}(y)$ is bounded uniformly in $x, y$, since all partial derivatives $(\nabla g)_i$
are bounded in absolute value by the envelope $G$ and $\overline{C_{\varepsilon}}$ is compact. Therefore, $$
\sup_{g\in\GG}\sup_{x,y\in \overline{C_{\varepsilon}}}\arrowvert Q_x^{(\nabla g)_i}(y)\arrowvert < \infty.
$$
With regard to (\ref{eq: final II}), 
$$
\sup_{\substack{x,y\in C_{\varepsilon}:\\ \Arrowvert x-y\Arrowvert_2\leq h_t}}\big\arrowvert  P_x^{b_i}(y)-b_i(x) \big\arrowvert \lesssim h_t,\ \ \ \text{while}\ \ \ \big\arrowvert\tilde{R}_{x,y}^{(\nabla g )_i}\big\arrowvert\ \leq\ L\cdot h_t^{\beta-1}.
$$
Collecting these bounds yields
$
\sup_{g\in\GG}\big\arrowvert \int \big( b(y)-b(x)\big)^t \nabla_wg(y)K_{h_t}(x-y)\d y\big\arrowvert \lesssim h_t^{\beta},
$
that is, (\ref{eq: est 2})$\,=o(1)$.  
\end{proof}

\section{Discussion}
In this article we analyze the empirical diffusion process in higher dimension. 
One motivation for our study was the observation due to \cite{vdvvz05} that it is possible to prove uniform central limit theorems
for empirical processes of scalar regular diffusions with finite speed measure under only pregaussian conditions.
This remarkable result reflects the increased regularity of empirical processes of scalar diffusions due to the existence of
local time, and the analysis of local time is an integral part of its proof.
In higher dimensions, diffusion local time does not exist, and therefore it was not clear at all how to derive 
Donsker theorems under necessary and sufficient conditions.
The question considered in this article is how to bring out some potentially increased regularity of empirical processes
of multidimensional diffusions as compared to classical empirical processes based on iid $\R^d$-valued random variables.

We start by showing that there exist strong parallels to classical empirical process theory.
If (the carr\'e du champ of) the diffusion satisfies Poincar\'e's inequality, then the classical condition of a finite bracketing
entropy integral due to Ossiander can be used for proving Donsker theorems.
Replacing the symmetrization device by an application of the generic chaining bound, 
it is also possible to deduce an analogue of Theorem 3.2 in \cite{gizi84} which describes the effect of pregaussianness on the
asymptotic equicontinuity criterion.
This result simplifies the problem of verifying asymptotic equicontinuity of the empirical diffusion process in the pregaussian setting
as it shows that it suffices to consider the supremum over balls with radius $d_{\G}(f,g) < \delta$ with $\delta =(\eta/\sqrt t)^{1/2}$
for some $\eta >0$.
One subtle difference to the case of classical empirical processes is however that the constraint is formulated in terms of the metric
$d_\G$ instead of a constraint in terms of $\|\cdot\|_{\mu,2}$.
Since Poincar\'e's inequality merely yields an upper bound on $d_{\G}$
in terms of $\Arrowvert A \cdot \Arrowvert_{\mu,2}$, it is not possible to use a decomposition based on the Cauchy--Schwarz inequality
as typically done in the classical situation for verifying the simplified asymptotic equicontinuity criterion.

However, it turned out that some modified version, the intermediate process indexed by smoothed functions, which does not appear naturally in the investigation first
has remarkable regularity properties.
Concerning the proof of this behavior, there are two crucial points.
First, we do not rely mainly on classical empirical process theory but use tools from stochastic analysis
such as martingale approximation relating the empirical diffusion process to the generator of the associated Markovian semigroup. 
In particular, an essential part of the proof is the subgaussian exponential inequality for continuous martingales.
This allows to reduce the proof of uniform weak convergence to deriving an upper bound on the expectation of the supremum of some 
stochastic process which is typically done by means of the chaining technique.
Here again we find substantial differences as compared to the classical situation for the empirical process based on iid random variables; see the discussion below the
proof of Theorem \ref{thm: smoothed functions}.
The outstanding regularity of the intermediate process indexed by smoothed functions in dimension $d=2$ is only detected
by using very sharp results about the occupation measure of planar Brownian motion.
Similar improvement of this type for $d\geq 3$, though less substantial, can presumably be derived again by fractal analysis of multidimensional diffusions.

Furthermore, we would like to stress that for any dimension $d$, already the result of Theorem \ref{thm: smoothed functions} shows a remarkable improvement on the 
restriction concerning the lower bound on the admissible
bandwidth as compared to the smoothed empirical process based on independent and identically distributed random variables in  $\R^d$; see the remark on p.17 and p.3 of the Introduction.

\begin{appendix}
\section*{Appendix}
We collect some elementary functional analytic requisites for our proofs, which might be well-known, 
yet we did not find them anywhere explicitly stated in the present form.

\begin{lemma}\label{lemma: wd}
		Assume that $g \in \WW^{1,2}(\d\mu)$ and $K_h$ is symmetric.
		Then it holds $\mu$-a.s.
		$$
		\partial_w^{\alpha}\left(g \ast K_h\right) = 
		\left(\partial_w^{\alpha}g\right)\ast K_h\ \ \ \text{for all}\ \alpha\in \{0,1\}^d\ \text{with}\ \arrowvert\alpha\arrowvert=1.
		$$
\end{lemma}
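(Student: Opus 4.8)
The plan is to verify directly that the candidate $(\partial_w^{\alpha}g)\ast K_h$ satisfies the defining integration-by-parts relation of the weak $\alpha$-derivative of $g\ast K_h$, and then to conclude by uniqueness of weak derivatives. Write $\alpha=e_i$ for its single nonzero coordinate, and let $\partial_{x_i}$ denote the classical partial derivative acting on test functions; weak derivatives are understood in the distributional (Lebesgue) sense. First I would record the local-integrability bookkeeping: by assumption (V) the density $\pi$ is bounded away from zero on compacts, so $\|g\|_{\mu,2}<\infty$ forces $g\in L^2_{\mathrm{loc}}(\d\lambda)\subset L^1_{\mathrm{loc}}(\d\lambda)$, and likewise $\partial_w^{\alpha}g\in L^1_{\mathrm{loc}}(\d\lambda)$. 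Since $K_h$ is bounded with compact support, both $g\ast K_h$ and $(\partial_w^{\alpha}g)\ast K_h$ are well-defined locally integrable functions, every convolution reducing to an integral over a compact set.

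Fix a test function $\varphi\in\CC_K^{\infty}$. The first step is the adjoint relation for convolution against a symmetric kernel: by Fubini's theorem and the symmetry $K_h(z)=K_h(-z)$,
\begin{equation}\nonumber
\int (g\ast K_h)\,\partial_{x_i}\varphi\,\d\lambda \;=\; \int g\,\big((\partial_{x_i}\varphi)\ast K_h\big)\,\d\lambda,
\end{equation}
the interchange being justified on the compact support of $\varphi\ast K_h$ by local integrability of $g$ and boundedness of $K_h$. The second step exploits that $\varphi\ast K_h\in\CC_K^{\infty}$ (a convolution of a smooth compactly supported function with a compactly supported kernel is again smooth and compactly supported) together with the elementary identity $(\partial_{x_i}\varphi)\ast K_h=\partial_{x_i}(\varphi\ast K_h)$. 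Applying the definition of the weak derivative of $g$ against the legitimate test function $\varphi\ast K_h$ yields
\begin{equation}\nonumber
\int g\,\partial_{x_i}(\varphi\ast K_h)\,\d\lambda \;=\; -\int (\partial_w^{\alpha}g)\,(\varphi\ast K_h)\,\d\lambda,
\end{equation}
and a final application of the same adjoint relation (again using symmetry of $K_h$ and Fubini) rewrites the right-hand side as $-\int\big((\partial_w^{\alpha}g)\ast K_h\big)\varphi\,\d\lambda$.

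Chaining these equalities shows that for every $\varphi\in\CC_K^{\infty}$
\begin{equation}\nonumber
\int (g\ast K_h)\,\partial_{x_i}\varphi\,\d\lambda \;=\; -\int\big((\partial_w^{\alpha}g)\ast K_h\big)\,\varphi\,\d\lambda,
\end{equation}
which is exactly the assertion that $(\partial_w^{\alpha}g)\ast K_h$ is the weak $x_i$-derivative of $g\ast K_h$. By uniqueness of weak derivatives this identity holds $\lambda$-a.e., and since $\mu\ll\lambda$ it also holds $\mu$-a.e., as claimed.

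This is essentially the classical fact that mollification commutes with weak differentiation, so I do not expect a conceptual obstacle; the genuine care is bookkeeping. The two points to handle are (a) using assumption (V) to pass from $L^2(\d\mu)$-membership to the local Lebesgue integrability that makes the convolutions meaningful and justifies each Fubini interchange, and (b) tracking the reflected kernel in the adjoint relation, which is precisely where the symmetry hypothesis on $K_h$ is used so that $K_h$ itself, rather than $z\mapsto K_h(-z)$, reappears in the final convolution. The passage from a $\lambda$-a.e. to a $\mu$-a.e. statement is then immediate from $\mu\ll\lambda$.
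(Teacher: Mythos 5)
Your proposal is correct and follows essentially the same route as the paper's own proof: testing against $\varphi\in\CC_K^{\infty}$, moving the convolution onto the test function via Fubini and the symmetry of $K_h$, commuting the classical derivative with the convolution so that $\varphi\ast K_h$ serves as a legitimate test function for the weak derivative of $g$, and then undoing the convolution transfer. The only differences are presentational: you make explicit the local-integrability bookkeeping via assumption (V), the uniqueness of weak derivatives, and the passage from $\lambda$-a.e.\ to $\mu$-a.e., points the paper leaves implicit.
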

\begin{proof}
			By definition of the weak derivative, it holds for all $\phi \in \CC_K^\infty$,
			\begin{eqnarray}\nonumber
			\int \partial^{\alpha}(g\ast K_h)(x)\phi(x)\d \lambda(x)\ &=& -\int\left(g\ast K_h\right)(x)\partial^{\alpha}\phi(x) \d \lambda(x) \\ \nonumber
				&=& - \iint g(y)K_h(x-y)\d y \ \partial^{\alpha}\phi(x)\d \lambda(x)\\\nonumber
				&=& - \iint \partial^{\alpha}\phi(x) K_h(x-y) \d x \ g(y) \d \lambda(y)\\\nonumber
				&=& - \int \left(\partial^{\alpha}\phi \ast K_h\right)(y)g(y)\d \lambda(y)\\\nonumber
				&=& - \int \partial^{\alpha}\left(\phi \ast K_h\right)(y) g(y)\d \lambda(y)\\\label{eq_id}
				&=& \int \partial_w^{\alpha} g(y)  \left(\phi \ast K_h\right)(y) \d \lambda(y)\\ \nonumber
				&=& \int (\partial^{\alpha}_wg)\ast K_h(x)\phi(x)\d \lambda(x),
			\end{eqnarray}
where the identity $\partial^{\alpha}(\phi\ast K_h)=(\partial^{\alpha}\phi)\ast K_h$ in (\ref{eq_id}) is proved, for instance, in Lemma 5 a) in \cite{gini08}.
\end{proof}

\begin{lemma}\label{lemma: sobolev}
Let $A$ be the generator of an It\^o--Feller diffusion in $E\subseteq \R^d$ with continuous drift and diffusion coefficient, $b$ and  $\sigma$, respectively. 
		Then any $g\in\WW^{2,2}(\d\mu)$ of compact support in $E\setminus \partial E$ belongs to $ {\D_A}$, and
		$$
			Ag(\cdot)\ =\ \frac{1}{2}\sum_{i,j=1}^d a_{ij}(\cdot)\frac{\partial_w^2 g}{\partial_w x_i\partial_w x_j}(\cdot)+ 
				\sum_{i=1}^d b_i(\cdot)\frac{\partial_w g}{\partial_w x_i}(\cdot).
		$$
\end{lemma}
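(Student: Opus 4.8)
The plan is to prove the statement by mollification combined with the closedness of $A$ on $L^2(\d\mu)$. Let $g\in\WW^{2,2}(\d\mu)$ have compact support $S$ in the interior $E\setminus\partial E$, and fix a symmetric, compactly supported, infinitely differentiable Dirac sequence $(\phi_h)_{h>0}$. For $h$ small enough the convolution $g_h:=g\ast\phi_h$ is smooth and compactly supported inside the interior of $E$, so that $g_h\in\CC_K^\infty\subset\D_A$ and $A$ acts on $g_h$ through the second-order differential operator (\ref{eq: A}). I would then show that both $g_h\to g$ and $Ag_h\to\tfrac12\sum_{i,j}a_{ij}\,\partial_w^2 g/(\partial_w x_i\partial_w x_j)+\sum_i b_i\,\partial_w g/\partial_w x_i$ in $L^2(\d\mu)$; the closedness of $A$ then yields simultaneously $g\in\D_A$ and the claimed identity.

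First I would note that, since $g$ has compact support and $(\phi_h)$ is a Dirac sequence, $g_h\to g$ in $L^2(\d\lambda)$, and hence in $L^2(\d\mu)$, because assumption (V) gives $\mu\sim\lambda$ on $S$ and on the slightly enlarged compact set carrying the $g_h$ for small $h$. To identify $Ag_h$, I would invoke Lemma \ref{lemma: wd}: for $\arrowvert\alpha\arrowvert=1$ it gives $\partial_w^\alpha g_h=(\partial_w^\alpha g)\ast\phi_h$; as $g\in\WW^{2,2}(\d\mu)$ implies $\partial_w^\alpha g\in\WW^{1,2}(\d\mu)$, a second application yields the analogous identity for the second-order derivatives, so that
$$
Ag_h=\frac12\sum_{i,j=1}^d a_{ij}\Big(\frac{\partial_w^2 g}{\partial_w x_i\partial_w x_j}\ast\phi_h\Big)+\sum_{i=1}^d b_i\Big(\frac{\partial_w g}{\partial_w x_i}\ast\phi_h\Big).
$$

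Next I would pass to the limit termwise. Each weak derivative of $g$ lies in $L^2(\d\mu)$ and has compact support, so its mollification converges to it in $L^2(\d\lambda)$, and therefore in $L^2(\d\mu)$ on $S$. Since $a_{ij}$ and $b_i$ are continuous, they are bounded on the relevant compact neighborhood of $S$; as multiplication by a bounded function is continuous on $L^2(\d\mu)$, each summand above converges in $L^2(\d\mu)$ to $a_{ij}\,\partial_w^2 g/(\partial_w x_i\partial_w x_j)$, respectively $b_i\,\partial_w g/\partial_w x_i$, which gives the desired convergence of $Ag_h$.

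The main obstacle is exactly this $L^2(\d\mu)$-convergence of $Ag_h$ with merely continuous (locally bounded) coefficients: one cannot control $a_{ij}$ and $b_i$ globally, so the argument must exploit the compactness of the support, the equivalence $\mu\sim\lambda$ furnished by assumption (V), and the continuity of multiplication by bounded functions in order to upgrade convergence of the Lebesgue convolutions to $L^2(\d\mu)$-convergence. Once this is secured, the closedness of $A$ forces $g\in\D_A$ with $Ag$ equal to the asserted second-order operator applied with weak derivatives.
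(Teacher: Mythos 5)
Your proposal is correct and follows essentially the same route as the paper's own proof: mollify $g$ by a smooth Dirac sequence, use Lemma \ref{lemma: wd} to commute weak derivatives with the convolution so that $A(g\ast\phi_h)$ takes the explicit differential form, pass to the limit in $L^2(\d\mu)$ using boundedness of $a_{ij}$, $b_i$ on compacts, and conclude by closedness of $A$. The only (harmless) difference is presentational: you route the Dirac-sequence convergence through $L^2(\d\lambda)$ and invoke assumption (V) explicitly, whereas the paper cites Folland's Theorem 8.14 directly for the $L^2(\d\mu)$-convergence.
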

\begin{proof} 
	Let $g\in\WW^{2,2}(\d\mu)$ be of compact support in $E\setminus\partial E$.  
	Let $\phi_h(\cdot)=h^{-d}\phi(\cdot/h)$ be a Dirac sequence with some twice continuously differentiable, symmetric kernel $\phi$. 
	Then $g\ast \phi_h$ is twice continuously differentiable and of compact support, hence $g\ast \phi_h\in \D_{A}$ for sufficiently small $h$, 
	and it holds by Lemma \ref{lemma: wd} that
	$$
	\partial^{\alpha}_w(g\ast \phi_h)\ =\ (\partial^{\alpha}_w g)\ast \phi_h\ \ \text{for all multi-indices $\alpha\in\{0,1,2\}^d$ with}\ |\alpha| \leq 2.
	$$
	Therefore,
	$$
		A(g\ast \phi_h)(x)\ =\  \frac{1}{2}\sum_{i,j=1}^d a_{ij}(x)\left(\frac{\partial_w^2g}{\partial_w x_i\partial_w x_j} \ast \phi_h\right)(x)+ 
		\sum_{i=1}^d b_i(x)\left(\frac{\partial_wg}{\partial_w x_i}\ast \phi_h\right)(x).
	$$
	Since $\phi_h$ defines a Dirac sequence, $\left\|g-g\ast \phi_h\right\|_{\mu,2}\rightarrow 0$ as $h\searrow 0$ (cf. Theorem 8.14 in \cite{foll99}). 
	Let 
	$$
	G(x)\ :=\ \frac{1}{2}\sum_{i,j=1}^d a_{ij}(x)\frac{\partial_w^2g}{\partial_w x_i\partial_w x_j}(x)+ 
		\sum_{i=1}^d b_i(x)\frac{\partial_wg}{\partial_w x_i}(x).
	$$
	Denote the union of the supports of $g$ and $g\ast \phi_h$ by $C_h$. 
	Then 
	\begin{align*}
	\Arrowvert A(g\ast \phi_h)-G\Arrowvert_{\mu,2}\ 
		&\leq \ \frac{1}{2}\sum_{i,j=1}^d \left\| a_{ij}\mathds{1}_{C_h}\right\|_{\sup}
				\left\| \frac{\partial_w^2g}{\partial_w x_i\partial_w x_j} \ast \phi_h\ - \frac{\partial_w^2g}{\partial_w x_i\partial_w x_j}\right\|_{\mu,2}\\
		& \hspace*{7mm} +\ \sum_{i=1}^d \left\| b_i\mathds{1}_{C_h}\right\|_{\sup}\left\|\frac{\partial_wg}{\partial_w x_i} \ast \phi_h\ 
				- \frac{\partial_wg}{\partial_w x_i}\right\|_{\mu,2}\\
		&\longrightarrow \ 0 \ \text{as}\ h\searrow 0,
	\end{align*}
	because $\left(\phi_h\right)$ is a Dirac sequence, $g\in\WW^{2,2}(\d\mu)$ and $a(\cdot)$ and $b(\cdot)$ are continuous,  
	hence uniformly bounded on a decreasing sequence of compacts. 
	But this implies $G=Ag$, since $A$ is closed.
\end{proof}
\end{appendix}

\bibliography{donsker_bib}
\bibliographystyle{apalike}
\nocite{radweg00}
\nocite{radweg03}

\end{document}